\let\emph\relax 
\newcommand*\rd{\mathop{}\!\mathrm{d}}
\newcommand{\scrq}{\mathscr{q}} 
\newcommand{\scru}{\mathscr{u}} 
\newcommand{\scrw}{\mathscr{w}} 
\newcommand{\scrh}{\mathscr{h}}
\newcommand{\Dn}{\mathbf{D}_n}
\newcommand{\DnE}{\mathbf{D}_n^E}
\newcommand{\DnEy}{\mathbf{D}_n^E(y)}
\newcommand{\wh}{\widehat}
\newcommand{\ya}{y_{\mathrm{A}}}
\renewcommand{\P}{\mathbb{P}}
\newcommand{\E}{\mathbb{E}}
\newcommand{\cB}{\mathcal{B}} 
\newcommand{\bx}{\mathbf{x}} 
\newcommand{\by}{\mathbf{y}} 
\newcommand{\bv}{\mathbf{v}}
\newcommand{\bz}{\mathbf{z}} 
\newcommand{\bX}{\mathbf{X}} 
\newcommand{\bY}{\mathbf{Y}} 
\newcommand{\bV}{\mathbf{V}} 
\newcommand{\bZ}{\mathbf{Z}} 
\newcommand{\bD}{\mathbf{D}} 
\newcommand{\wx}{\widetilde x}
\newcommand{\R}{\mathbb{R}}
\newcommand{\Q}{\mathbb{Q}}
\newcommand{\eps}{\varepsilon}
\newcommand{\bone}{\mathbf{1}}
\newcommand{\prt}{\partial}
\theoremstyle{plain}
\newtheorem{theorem}{Theorem}[section]
\newtheorem{lemma}[theorem]{Lemma}
\newtheorem{proposition}[theorem]{Proposition}
\theoremstyle{definition}
\newtheorem{remark}[theorem]{Remark}
\theoremstyle{remark}
\DeclareMathOperator{\sign}{sign}
\DeclareMathOperator{\Vol}{Vol}
\DeclareMathOperator{\var}{Var}
\def\crn#1#2{{\vcenter{\vbox{
        \hbox{\kern#2pt \vrule width.#2pt height#1pt
           }
          \hrule height.#2pt}}}}
\newcommand{\formula}[2][nolabel]
{\ifthenelse{\equal{#1}{nolabel}}
 {\begin{align*} #2 \end{align*}}
 {\ifthenelse{\equal{#1}{}}
  {\begin{align} #2 \end{align}}
  {\begin{align} \label{#1} #2 \end{align}}
 }
}
\numberwithin{equation}{section}
\begin{document}

%
%

\title[Archimedes' principle]{Archimedes' principle for ideal gas}

\author{Krzysztof Burdzy and Jacek Ma\l{}ecki}

\address{KB: Department of Mathematics, Box 354350, University of Washington, Seattle, WA 98195}
\email{burdzy@uw.edu}

\address{JM: Department of Mathematics \\ Wroc{\l}aw University of Science and Technology \\ ul. Wybrze{\.z}e Wyspia{\'n}\-skiego 27 \\ 50-370 Wroc{\l}aw, Poland}
\email{jacek.malecki@pwr.edu.pl}

\thanks{KB's research was supported in part by Simons Foundation Grant 506732. J. Ma\l{}ecki was supported by the Polish National Science Centre (NCN) grant no. 2018/29/B/ST1/02030.}

\keywords{Archimedes' principle, ideal gas}

\subjclass[2010]{82B21; 82C21}

\begin{abstract}
We prove Archimedes' principle for a macroscopic ball in ideal gas consisting of point particles with non-zero mass. The main result is an  asymptotic theorem, as the number of point particles goes to infinity and their total mass remains constant. We also show that, asymptotically, the gas has an exponential density as a function of height. We find the asymptotic inverse temperature of the gas. We derive an accurate estimate of the volume of the phase space using the local central limit theorem.
\end{abstract}

\maketitle

\section{Introduction}

There seems to be no rigorous proof of Archimedes' principle in the mathematical literature. The most likely reason for this omission is that Archimedes' principle is  trivial given a few natural  assumptions. The principle can be easily derived using the divergence theorem, assuming that the formula for the pressure as a function of height is known. The ``barometric formula'' which says that the pressure in gas has an exponential density as a function of height can be easily derived from the ideal gas law.
While this derivation of Archimedes' principle is sufficient for the scientific applications, one could ask whether the principle can be derived from a more fundamental model of the matter, as in Hilbert's 6-th problem. This is what we will do in the present article.
Perhaps the most significant difference between our model and the derivation of Archimedes' principle alluded to above is that the floating object is allowed to move in our case. We are not aware of an existing proof of Archimedes' principle, rigorous or not, based on a model with a moving floating object.

We will consider a container with a bounded base, vertical side walls and no top. The container will hold point particles (ideal gas) and a floating object in the shape of a ball and no internal structure (the mass will be uniformly spread over the ball). The spherical shape of the floating object allows us to avoid the discussion of the energy going into rotation---the collisions of the ball with point particles and the walls of the container will not induce  rotation of the ball.

The point particles and the ball will move according to Newton's laws in a gravitational field with constant acceleration.
We will assume  conservation of energy and momentum but this assumption does not have a unique interpretation in the case of  collisions of point particles with the infinitely heavy walls of the container. We will consider two types of reflections of point particles from the walls of the container: (i) specular reflections where the angle of reflection is equal to the angle of incidence, and (ii) random reflections according to the Lambertian distribution also known as the Knudsen law. We will assume that the system is in equilibrium so that its density is given by the microcanonical ensemble formula. We will prove that this distribution is the unique invariant measure  in case (ii). Simple examples show that there are multiple invariant measures if we assume specular reflections.

In our asymptotic results we will assume that the following objects and quantities are fixed: the container, the mass and radius of the macroscopic ball, the total mass of the gas (all point particles), the total (potential and kinetic) energy of all moving objects (point particles and the ball), and the gravitational acceleration. The number of point particles will go to infinity so the mass of a single particle will go to zero.

On the way to the main result, Archimedes' principle, we will derive a few other results that may have independent interest.

We will present an accurate formula for the volume of the phase space. Our calculation is based on the local central limit theorem. This theorem was proved long time ago but the literature is hard to follow so we hope that our short review of that literature will help those readers who might need this result in their own research.

The microcanonical ensemble formula is well known, see, e.g., 
\cite[Sect.~1.2]{Ruelle}. 
We could not find a version of the formula needed here in the literature so
a derivation was supplied in a parallel project \cite{fermi}. We will not reproduce that proof here but we will present a brief review.
 
We will prove a version of the barometric formula, i.e., we will show that the density of the gas has, asymptotically, an exponential density as a function of height. We will show that the parameter of the exponential distribution can be identified with the inverse temperature.

\subsection{Literature review} 

A version of Archimedes' principle was proved in \cite{archim} but that model was completely different from the present one. The ``gas'' consisted of hard spheres with strictly positive radius. Their centers moved according to independent Brownian motions (except for the collisions). The number of ``gas molecules'' was constant and the asymptotic theorems were proved by sending the ``gravitational acceleration'' to infinity.

Our article is concerned with a model in which a macroscopic object interacts with microscopic molecules according to Newtonian mechanics. In this sense, our model is closest to the ``piston problem''  proposed in \cite{Lieb99}. A large number of papers were inspired by \cite{Lieb99} and devoted to the piston problem; see, for example, \cite{Sin99,LPS,CL2002,CLS2002,LSC2002,NeiSin,Gorel,IS} and references therein. 
Several different models were considered in those papers. 
In one of the models, a piston moves along a tube and is bombarded by microscopic molecules from  both sides.

\subsection{Limitations of the model}

Our model is, obviously, an oversimplified representation of reality and there is no hope that it could be modified to be very realistic. Still, from the mathematical point of view, some aspects of the model might be worth generalizing in future research.

(i) In our model, the macroscopic ball has no internal structure so it has negligibly small heat capacity. It might be possible to analyze a model in which the ball is replaced with a ``balloon,'' i.e., an infinitely thin sphere holding inside (mobile) point particles  with different masses than those of the outside particles.

(ii) Since we assume that the gas is ideal, the point particles do not interact and, therefore, their collisions with the macroscopic ball are the only way in which they  can exchange   energy between each other. This is the only way in which the energy may become approximately equidistributed in the stationary regime.
To generalize our results to gases consisting of hard spheres with positive radius one would need accurate estimates of the volume of the phase space. The virial expansion might be useful in this context, see \cite[Sect. 8i]{Mayer} or \cite[Sect. 4.3]{Ruelle}. If this approach works, it will require a whole new set of calculations.

(iii) In our model, the container has a flat bottom and vertical side walls. Many of our calculations depend on this assumption.

\subsection{Organization of the paper}
The rigorous presentation of the model and the statement of our main results are in Section \ref{y19.2}. We will review known results on the local central limit theorem in Section \ref{y19.3}. Section \ref{y19.4} contains a review of the results on the microcanonical ensemble formula developed in a parallel paper.  We will derive an accurate estimate of the phase space volume in Section \ref{y19.5}. The proof of Archimedes' principle will be given in Section \ref{y19.6}. The inverse temperature will be identified  in Section \ref{y19.7}. The uniqueness of the stationary probability distribution under Lambertian reflections will be proved in Section \ref{y19.8}.

\section{Model and main results}\label{y19.2}
We will consider $n$ point particles (ideal gas) and one macroscopic hard ball
in a $d$-dimensional container $D$ with  vertical walls, a bounded base, extending upward to infinity in the vertical direction.  

Suppose that $d\geq 2$ and 
let $D_b\subset \R^{d-1}$ denote the bottom of the container $D=D_b\times[0,\infty)$ in $\R^d$. 
The radius of the $(n+1)$-st (macroscopic) ball will be fixed and denoted $R >0$. Obviously, the point particles will have radii equal to $0$.
We will assume that $D_b$ has a smooth boundary and satisfies the inner ball condition with radius $R$, i.e., for every point $x\in \prt D_b$, there is a unique $(d-1)$-dimensional open ball with radius $R$ inside $D_b$ whose boundary is tangent to $\prt D_b$ at $x$ and, moreover, $x$ is the only point in the intersection of the ball boundary and $\prt D_b$. We will also assume that a closed ball of radius $R$ fits in the interior of $D$, so that each point particle has room to move from below to above the ball, and vice versa.

We will assume that  the mass of the $i$-th point particle is $m_i=m/n>0$ for $i=1,\dots, n$, for some $m>0$. When we let $n\to \infty$ in our theorems, the total mass of point particles will remain constant and equal to $m$. The mass of the macroscopic ball will be denoted $M = m_{n+1}>0$.
We will assume that
the mass is evenly spread over the volume of the macroscopic ball.
The walls of the container will be assumed to  have infinite mass---this assumption is a way of specifying the meaning of ``totally elastic collisions'' of point particles and the ball with the walls of the container.  

We will assume that the point particles and the ball are moving within a gravitational field with the constant acceleration $g>0$ pointing downwards. The point particles will not interact with each other. They will reflect from the walls of the container and they will undergo totally elastic collisions with the ball. The collisions of the  ball with the walls of the container will be totally elastic.

Time will  be suppressed in the notation, except for Section \ref{y19.8}. 
We will assume that the system is in equilibrium. Random objects  will represent the state of the system at time 0 (or any other fixed time). 

Let $(X_i,Y_i)\in\R^d$ denote the random position of the  $i$-th point particle or the center of the macroscopic ball (for $i=n+1$), where $X_i\in \R^{d-1}$ represents  the horizontal coordinates and $Y_i\geq 0$ is the vertical coordinate ($Y_{n+1} \geq R$). By $V_i\in\R^d$ we will denote the random velocity of the $i$-th point particle or the ball. 

Let 
\begin{align}\label{a1.4}
\bx_k&= (x_1,\ldots,x_{k}), \qquad \by_k= (y_1,\ldots,y_{k}),\qquad \bv_k=(v_1,\ldots,v_{k}),\\
 \bX_{k}&= (X_1,\ldots,X_{k}), \qquad  \bY_{k}= (Y_1,\ldots,Y_{k}), \qquad \bV_{k}= (V_1,\ldots,V_{k}), \label{j24.4}
\end{align}
where $x_i \in \R^{d-1}$, $y_i \in \R_+$, and $v_i \in \R^d$.
In the notation given above, upper case letters represent random variables and lower case letters represent their values.

 We will consider $(\bX_{n+1},\bY_{n+1},\bV_{n+1})$ to be a random vector distributed according to the microcanonical ensemble formula \eqref{j10.4} stated below, although we will give different distributions to these random vectors in some of the proofs.

We will assume that the total energy of our system,  $E$, is fixed. Hence, a.s.,
\begin{align}\label{y19.1}
E= \sum_{i=1}^{n+1} \left( m_igY_i+\frac{m_i||V_i||^2}{2}\right)\/,
\end{align}
with the convention that the zero level represents zero potential energy.
We will always assume that $E>MgR $ so that the ball and point particles cannot rest motionless at the bottom.

A ball in $\R^d$ with center $(x,y)$ and radius $r$ will be denoted $\cB((x,y), r)$.
 Let
\begin{align}\label{j10.1}
&D_b' = \{x\in D_b: \cB((x, y), R)\subset D
\text{  for all  } y>R\},\\
\label{j9.4}
 &\Dn = \Big\{( \bx_n,x_{n+1},\by_n,y_{n+1},\mathbf{v}_{n+1})\in D_b^{n}\times D_b'\times \R_+^n \times [R,\infty) \times \R^{nd}: \\
 & \qquad \qquad( x_k,  y_k) \in D\setminus \cB((x_{n+1},y_{n+1}),R), \ k=1,\dots n\Big\},\notag\\
&\DnE = \left\{(\mathbf{x}_{n+1},\by_{n+1},\mathbf{v}_{n+1})\in\Dn: \sum_{i=1}^{n+1}\left( m_iy_ig+\frac 1 2 m_iv_i^2\right)=E\right\} .\notag\\
\label{eq:DnEy:defn}
&\DnEy = \bigg\{(\mathbf{x}_{n},x_{n+1},\by_{n})\in D_b^{n}\times D_b'\times \R_+^n: \sum_{i=1}^{n+1} m_iy_ig\leq E, \\
&\qquad \qquad \qquad( x_k,  y_k) \in D\setminus \cB((x_{n+1},y),R), \ k=1,\dots n\bigg\}.\notag
\end{align}

Let  $\mu_{\by_{n+1}}(\rd x)$
denote 
the uniform probability measure on the sphere in $\R^{(n+1)d}$ (so that the sphere is
$((n+1)d-1)$-dimensional), 
centered at the origin, with the radius
$\left(2E-2\sum_{i=1}^{n+1} m_iy_ig\right)^{1/2}$.
Consider the following measure $\P_n$ on $\DnE$, 
\begin{align}\label{j10.4}
\P_n&(\rd \mathbf{x}_{n+1} \rd  \by_{n+1} \rd \mathbf{v}_{n+1})\\
& = C\left(E-\sum_{i=1}^{n+1} m_iy_ig\right)^{((n+1)d-2)/2}
\mu_{\by_{n+1}}\left(\frac{\rd v_1}{\sqrt{m_1}},\ldots,\frac{\rd v_{n+1}}{\sqrt{m_{n+1}}}\right)\rd \mathbf{x}_{n+1}\rd  \by_{n+1},\notag
\end{align}
where $C$ is the normalizing constant so that $\P_n$ is a probability measure. The measure $\P_n$ is a special case of the ``microcanonical ensemble formula,'' see, e.g.,  \cite[Sect.~1.2]{Ruelle}.

We will say that a point particle undergoes a Lambertian reflection from a surface or that the particle reflects according to the Knudsen law if the reflection occurs at a point where the inner normal to the surface is uniquely defined, the probability density of the angle between the reflected trajectory and the normal vector is proportional to the cosine of the angle, and the law is invariant under rotations about the normal vector. Note that the Lambertian distribution of the  outgoing velocity vector is independent from the incoming velocity vector, except that the two velocities have the same norm, so that  energy is conserved. 
This law for  random reflections was considered by Lambert in the context of light reflection from rough surfaces (\cite{L1760}) and by Knudsen (\cite{K1934}) as a model for gas molecule reflections.
By \cite[Cor. 3.2]{fermi} (which can be derived from \cite[Thm. 4.1]{Plakh} or \cite[Thm. 2.2]{ABS}), the Lambertian reflection law is the unique random reflection law that does not depend on the angle of incidence and is consistent with the standard (specular) reflection law from a rough surface consisting of small crystals with smooth reflecting surfaces.

\begin{theorem}\label{j18.1}
(i) The measure $\P_n$ (microcanonical ensemble formula) is invariant for the dynamical system defined above and two types of reflections: 

(a) totally elastic reflections between any pair of objects, 

(b) independent Lambertian reflections for point particles reflecting from the container walls (including the bottom) and totally elastic reflections between any other pair  of objects.

(ii) In case (b), $\P_n$ is the unique non-degenerate stationary probability distribution for the system of point particles and the macroscopic ball. The following are the only classes of ``degenerate'' invariant distributions:

(1) Invariant distributions such that no point particle ever hits a wall of the container.

(2) Invariant distributions such that at least one point particle has no energy, so that it is resting at the  bottom of the container. 
\end{theorem}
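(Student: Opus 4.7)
The plan is to treat parts (i) and (ii) separately. For (i), I would identify $\P_n$ with Liouville measure on the energy shell and verify invariance under each building block of the dynamics: free motion, deterministic collisions, and Lambertian reflections. For (ii), I would prove uniqueness by a Harris-type ergodicity argument that exploits the randomness injected by the Lambertian reflections.

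For part (i)(a), the change of variables $w_i = \sqrt{m_i}\,v_i$ turns the Hamiltonian into $H = \sum_i m_i g y_i + \tfrac{1}{2}\sum_i \|w_i\|^2$, and the energy shell $\{H=E\}$ at fixed positions $\by_{n+1}$ becomes the sphere in $\R^{(n+1)d}$ of radius $\sqrt{2(E-\sum_i m_i g y_i)}$. The measure $\mu_{\by_{n+1}}$ is the normalized uniform measure on this sphere and the factor $(E-\sum_i m_i g y_i)^{((n+1)d-2)/2}$ in \eqref{j10.4} is exactly the sphere's surface area scaling, so $\P_n$ is (up to a normalizing constant) Liouville measure restricted to the energy shell. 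Between collisions the flow is Hamiltonian and preserves Liouville measure. Each deterministic collision---specular reflection of a point particle or of the ball off a flat wall, or elastic particle-ball collision---is, in the $w$-variables, an orthogonal involution that also preserves $H$, so it preserves surface measure on the shell. Thus $\P_n$ is invariant in case (a).

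For part (i)(b), the Lambertian step reduces to a detailed-balance computation at a single wall. Conditioning on positions and on all velocities except $V_i$, the $\P_n$-conditional law of $V_i$ is, after mass rescaling, uniform on a sphere in $\R^d$. When particle $i$ approaches a flat wall with inward normal $\hat{n}$, the distribution of its velocity sampled at the collision time is biased by the crossing rate $|v\cdot\hat{n}|$ on the incoming hemisphere; in (speed, angle) coordinates, this incoming flux density is proportional to $\cos\theta$ and azimuthally uniform, i.e., exactly the Lambertian law. The Lambertian outgoing velocity, which has the same speed and a fresh Lambertian-distributed angle, therefore preserves the uniform sphere law. Combined with part (i)(a) and a standard localization of the collision events, this gives invariance of the full measure $\P_n$ in case (b).

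For part (ii), I would view the system as a piecewise deterministic Markov process in which the randomness comes only from Lambertian reflections. The plan is: (a) a positivity step, showing that from any configuration outside the degenerate strata (1)--(2) there is positive probability that some point particle strikes a wall within bounded time, so the corresponding Lambertian output has a density on the outgoing hemisphere; (b) a controllability step, showing that by choosing finitely many Lambertian angles and collision times one can drive the system between any two open subsets of $\DnE$ avoiding the degenerate strata. Together with a Doeblin-type coupling argument, (a) and (b) imply that any non-degenerate stationary measure must agree with $\P_n$. The hardest step will be (b): since point particles do not interact directly, any redistribution of energy among them must be mediated by the macroscopic ball, so one must construct explicit sequences of three-body events in which a randomized wall-kick on one particle is transmitted through the ball to a prescribed change in another particle's energy. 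The two excluded classes are exactly those where this mechanism fails---in (1) no Lambertian event ever occurs, and in (2) a particle is frozen at zero energy on the bottom and cannot be contacted by the ball, so it is decoupled from the mixing.
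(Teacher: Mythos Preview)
Your outline for part (i) is correct and in fact more detailed than the paper's own treatment, which simply cites the companion work \cite{fermi} for both the Liouville invariance under free flow and elastic collisions and the detailed balance of the Lambertian kernel against the uniform measure on the velocity sphere. For part (ii) your strategy is essentially the paper's: both are controllability-plus-density-overlap arguments exploiting Lambertian randomness, and both identify that energy redistribution among point particles must be routed through collisions with the macroscopic ball. The paper is more explicit than your Harris framework---it fixes a single reference configuration (particles at prescribed base points $z_1,\dots,z_n$ on the floor, ball bouncing vertically above $z_{n+1}$, energy equipartitioned) and constructs, from any non-degenerate initial state, a deterministic trajectory compatible with the dynamics that reaches that configuration at a common time $t_1$---but the logical structure is the same, and your identification of the degenerate classes (1) and (2) as exactly the obstructions to this mechanism matches the paper's.

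There is, however, one genuine subtlety you do not address and which the paper singles out as ``the subtle point'': the state lives on the codimension-one energy surface $\DnE$, so the minorizing density you need is with respect to surface measure on $\{H=E\}$, not Lebesgue measure on $\R^{2(n+1)d}$. Concretely, once Lambertian reflections and ball collisions have endowed the positions, directions, and speeds of particles $1,\dots,n-1$ and of the ball with a joint density, the remaining particle's speed is \emph{determined} by energy conservation and cannot be independently randomized. The paper handles this by ordering the construction so that particle $n$ interacts last and its energy serves as the slack variable, and then argues the density inductively on $\DnE$. Your Doeblin/Harris minorization must likewise be formulated on $\DnE$ with one degree of freedom removed; as stated, your step (b) (``drive the system between any two open subsets of $\DnE$'') glosses over exactly this constraint, and without it the coupling step does not close.
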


The proof of Theorem \ref{j18.1} will be given in Sections \ref{y19.4} and \ref{y19.8}.

\begin{remark}
(i) Part (ii) of Theorem \ref{j18.1} is not true under assumption (a); see Remark \ref{a1.1} (i).

(ii) We call invariant distributions in (1) degenerate because Lambertian reflections are never activated and the system has no opportunity to mix.
See Remark \ref{a1.1} (ii) for an example of such a distribution.

(iii)
Invariant distributions in (2) are degenerate because in this case the number of point particles is effectively less than $n$. 
\end{remark}

We will often use integrals of the form 
$\int_{D\setminus\cB((\wx, y), R)} \lambda e^{-\lambda r} \rd x \rd r $. In cases like this (and similar), $\wx$ should be interpreted as any point in $D_b'$, $\rd x$ will represent Lebesgue measure in $\R^{d-1}$, and $\rd r$ will represent Lebesgue measure in $\R$. Note that the integral does not depend on $\wx$ as long as $\wx \in D_b'$.

\begin{theorem}\label{j24.1}
Fix $d,D,R,M,m,g$ and $E$. 
Consider the equations
\begin{align}\label{a20.2}
 &M = m\frac
 {\int_{\cB((\wx, y), R)} \lambda e^{-\lambda r} \rd x \rd r}
 {\int_{D\setminus\cB((\wx, y), R)} \lambda e^{-\lambda r} \rd x \rd r } ,\\
&\frac{dmg}{2\lambda} + mg \frac
{ \int_{D\setminus\cB((\wx, y), R)} r\lambda e^{-\lambda r} \rd x \rd r} 
{ \int_{D\setminus\cB((\wx, y), R)} \lambda e^{-\lambda r} \rd x \rd r} 
+ Mg y = E, \label{j27.10}
\end{align}
with unknowns $y\geq R$ and $\lambda>0$.

(i)
There exists a unique $\lambda_*$ which satisfies \eqref{j27.10} with $y=R$. 

(ii) Suppose that 
\begin{align}\label{a8.12}
 &M < m\frac
 {\int_{\cB((\wx, R), R)} \lambda_* e^{-\lambda_* r} \rd x \rd r}
 {\int_{D\setminus\cB((\wx, R), R)} \lambda_* e^{-\lambda_* r} \rd x \rd r }.
\end{align}
Then \eqref{a20.2}-\eqref{j27.10} have  a unique solution that will be denoted $(\ya, \lambda_A)$. Moreover $R< y_A< E/(Mg)$.

\end{theorem}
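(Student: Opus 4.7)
Part (i) follows from a direct monotonicity-plus-IVT argument. Fix $y=R$ and view the LHS of \eqref{j27.10} as a function $F(\lambda)$ of $\lambda>0$. Writing $h(r)$ for the $(d-1)$-dimensional cross-sectional area of $D\setminus\cB((\wx,R),R)$ at height $r$, the ratio in \eqref{j27.10} is the mean of $r$ under the probability measure on $[0,\infty)$ with density proportional to $e^{-\lambda r}h(r)$; the standard log-derivative identity then gives that this mean has $\lambda$-derivative $-\var(r)<0$. Combined with the monotonicity of $dmg/(2\lambda)$, $F$ is strictly decreasing. Further, $F(\lambda)\to\infty$ as $\lambda\to 0^+$, while $F(\lambda)\to MgR$ as $\lambda\to\infty$ since $h(0)=|D_b|>0$ forces the restricted exponential to concentrate at $r=0$. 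The IVT and the standing assumption $E>MgR$ yield a unique $\lambda_*$.

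For part (ii), the key geometric observation---valid because $y\geq R$ places the ball strictly above the floor---is that the substitution $u=r-y$ yields
\[
\int_{\cB((\wx,y),R)} e^{-\lambda r}\,dx\,dr = e^{-\lambda y}J(\lambda),\qquad J(\lambda):=\int_{-R}^{R} c_{d-1}(R^2-u^2)^{(d-1)/2}e^{-\lambda u}\,du,
\]
with $J$ independent of $y$. Combined with $\int_D e^{-\lambda r}\,dx\,dr = |D_b|/\lambda$, the $\lambda$-factor cancels in \eqref{a20.2} and the equation reduces to $(M+m)\lambda J(\lambda)e^{-\lambda y}=M|D_b|$, which solves explicitly to
\[
y = y_2(\lambda) := \frac{1}{\lambda}\log\frac{(M+m)\lambda J(\lambda)}{M|D_b|}.
\]
A short algebraic manipulation shows that condition \eqref{a8.12} is equivalent to $y_2(\lambda_*)>R$. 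Meanwhile, the argument of part (i) applies verbatim with $R$ replaced by any $y\in[R,E/(Mg))$, producing a unique $\lambda_1(y)$ solving \eqref{j27.10}, with $\lambda_1(R)=\lambda_*$ and $\lambda_1(y)\to\infty$ as $y\nearrow E/(Mg)$. Hence the system is equivalent to finding a zero of $\phi(y):=y_2(\lambda_1(y))-y$ on $[R,E/(Mg))$.

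Existence then follows from $\phi(R)=y_2(\lambda_*)-R>0$ together with $\phi(y)\to R-E/(Mg)<0$ as $y\nearrow E/(Mg)$: the latter uses Laplace's method to obtain $J(\lambda)\sim C\lambda^{-(d+1)/2}e^{\lambda R}$ as $\lambda\to\infty$ (the integrand for $J$ concentrates at $u=-R$), whence $y_2(\lambda)\to R$. Continuity of $\phi$ and the IVT then produce $y_A\in(R,E/(Mg))$, with the claimed bounds built in. The main obstacle is uniqueness; the plan is to show $\phi$ is strictly decreasing by computing $\phi'(y)=y_2'(\lambda_1(y))\,\lambda_1'(y)-1$, where $\lambda_1'(y)$ comes from implicit differentiation of \eqref{j27.10} (using that the LHS of \eqref{j27.10} is strictly decreasing in $\lambda$ by part (i)), and $y_2'$ from the explicit formula. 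The resulting inequality $\phi'(y)<0$ is not immediate termwise; after simplification it reduces to a variance-type comparison for the restricted exponential distribution on $D\setminus\cB((\wx,y),R)$, which must be verified by direct calculation. An alternative route is to recognize \eqref{a20.2}--\eqref{j27.10} as critical-point conditions for a suitable strictly convex free-energy functional and deduce uniqueness from convexity.
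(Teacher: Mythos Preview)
Your part (i) is correct and essentially the paper's argument. For existence in part (ii), your explicit solution $y=y_2(\lambda)$ of \eqref{a20.2} and the intermediate-value argument for $\phi(y)=y_2(\lambda_1(y))-y$ are fine, and slightly more concrete than the paper's version (which works directly with $K(\lambda_y,y)\to 0$ as $y\uparrow E/(Mg)$).

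The genuine gap is uniqueness, which you yourself flag as ``the main obstacle'' and then leave at ``must be verified by direct calculation,'' with a vague alternative appeal to a convex free-energy functional. Neither is a proof. The paper does carry out the calculation, but with a twist you may be missing: it does \textit{not} try to show global monotonicity (your $\phi'<0$ everywhere). Writing $q_0(y)=q(u(y),y)$ with $u(y)$ the unique solution of \eqref{j27.10}, the paper uses Lemma~\ref{lem:der} to compute $\dfrac{d}{dy}q_0(y)$ and, after clearing a positive factor $1-\tfrac{d}{2\lambda^2}\partial\lambda/\partial u$, obtains
\[
\frac{q_0|D_b|}{\lambda(w-q_0u^2)}\left((1-\lambda u)^2+(1-\lambda u)\Bigl(\tfrac{q_0}{|D_b|}(1+M/m)-1\Bigr)+\tfrac{d}{2}\bigl(1-\tfrac{q_0}{|D_b|}\bigr)\right)+\lambda(|D_b|-q_0).
\]
The key observation is that \textit{at any point where \eqref{a20.2} holds} one has $q_0/|D_b|=(1+M/m)^{-1}$, so the middle summand vanishes and the remaining terms are manifestly positive (using $w/q_0-u^2=\var>0$ and $|D_b|>q_0$). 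Hence $dq_0/dy>0$ at every crossing of the level $|D_b|/(1+M/m)$, and a smooth function cannot cross a level twice if the derivative is strictly positive at every crossing. This ``sign only at crossing points'' device is the missing idea; your plan to prove $\phi'<0$ for all $y$ may be aiming at something stronger than what is actually true, and in any case the same derivative computation is unavoidable.
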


The proof of Theorem \ref{j24.1} will be given in Section \ref{y19.5}.

For $\wx \in D_b'$, $y'\geq R$ and $\lambda >0$,
let $ \nu_{\wx,y',\lambda}$ be the probability distribution  defined by
\begin{align}\label{j24.3}
\nu_{\wx,y',\lambda}(\rd x,\rd y)=\frac {
\bone_{D\setminus \cB((\wx,y'),R)}(x,y) \lambda e^{-\lambda y}}
{\int_{D\setminus\cB((\wx, y'), R)} \lambda e^{-\lambda r} \rd z \rd r}\rd x\rd y
, \qquad x\in D_b, y>0 .
\end{align}

Let $\delta_{(X_i, Y_i)}$ denote the probability measure on $D$ which consists of a single atom at $(X_i,Y_i)$. The normalized (probability) empirical distribution $\Q_n$ of the gas (point particles) is defined as 
\begin{align}\label{j13.1}
\Q_n = \frac 1 n \sum_{i=1}^n \delta_{(X_i, Y_i)}  .
\end{align}

\begin{theorem}\label{a20.4}
(Archimedes' principle)
Fix $d,D,R,M,m,g$ and $E$.
Assume that the distribution of $(\bX_{n+1},\bY_{n+1}, \bV_{n+1})$ is  $\P_n$. Recall the notation from Theorem \ref{j24.1} and assume that \eqref{a8.12} holds.

(i) For every $\eps>0$,
\begin{align}\label{j8.1}
\lim_{n\to \infty} \P_n( |Y_{n+1} -\ya|>\eps)=0.
\end{align}

(ii) 
The marginal distribution of $X_{n+1}$ under $\P_n$ is uniform in $D_b'$. Given $\{X_{n+1}=x\}$, the conditional distribution of  $\Q_n$ converges to $\nu_{x,\ya,\lambda_A}$ weakly, in probability as $n\to \infty$.

\end{theorem}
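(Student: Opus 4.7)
The strategy is to marginalize \eqref{j10.4} over velocities, identify the ball's horizontal position as manifestly uniform, then use exponential tilting to recognize the conditional law of the gas given the ball's location, and close with the phase space volume estimate of Section~\ref{y19.5} to localize $Y_{n+1}$.

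First I would integrate \eqref{j10.4} over $\bv_{n+1}$. Since $\mu_{\by_{n+1}}$ is a probability measure, the $\P_n$-marginal density of $(\bX_{n+1},\bY_{n+1})$ is proportional to
\[
\bigl(E-Mgy_{n+1}-\tfrac{mg}{n}\textstyle\sum_{i=1}^n y_i\bigr)^{((n+1)d-2)/2}\bone_{\DnEy}(\bx_{n+1},\by_{n+1}).
\]
This expression depends on $x_{n+1}$ only through the constraint $x_{n+1}\in D_b'$, which factors from the rest, so the marginal law of $X_{n+1}$ is uniform on $D_b'$; this gives the first assertion of (ii). Now condition on $(X_{n+1},Y_{n+1})=(x,y)$ and set $\beta_n=((n+1)d-2)/2$, $T_n=\tfrac{1}{n}\sum_{i=1}^n y_i$. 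The conditional density of $(\bx_n,\by_n)$ is proportional to $(E-Mgy-mgT_n)^{\beta_n}$ on $(D\setminus\cB((x,y),R))^n$.

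I would then run an exponential-tilting argument, rewriting the conditional density (up to a constant) as the iid product $\prod_{i=1}^n\lambda e^{-\lambda y_i}\bone_{D\setminus\cB((x,y),R)}(x_i,y_i)$ times the penalty $\exp\bigl(\beta_n\log(E-Mgy-mgT_n)+\lambda nT_n\bigr)$. The stationarity of the penalty in $T_n$, coupled with the fixed-point relation $\bar y=\int y'\,\nu_{x,y,\lambda}(\rd x'\,\rd y')$, determines a unique pair $(\lambda(y),\bar y(y))$; passing to $n\to\infty$ recovers \eqref{j27.10}, so $\lambda(y_A)=\lambda_A$. A second-order Taylor expansion converts the penalty into $\exp(-c_y\,n(T_n-\bar y(y))^2(1+o(1)))$ for an explicit $c_y>0$, from which sub-Gaussian concentration of $T_n$ under the tilted product measure shows the conditional law is close in total variation to $(\nu_{x,y,\lambda(y)})^{\otimes n}$. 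The LLN for iid samples then yields $\Q_n\to\nu_{x,y,\lambda(y)}$ weakly in probability, locally uniformly in $y$ near $y_A$.

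Finally, the marginal density of $Y_{n+1}$ is proportional to $V_n(y):=\int(E-Mgy-mgT_n)^{\beta_n}\bone_{\DnEy}\rd\bx_n\rd\by_n$, for which Section~\ref{y19.5} supplies a sharp asymptotic via the local central limit theorem from Section~\ref{y19.3}. Writing $\log V_n(y)=nI(y)+O(\log n)$, the first-order condition $I'(y_A)=0$ is, after rearrangement, precisely \eqref{a20.2}, while \eqref{j27.10} is built into the admissibility set $\DnEy$; by Theorem~\ref{j24.1}, $(y_A,\lambda_A)$ is the unique critical point, and a quadratic upper bound near $y_A$ yields \eqref{j8.1}. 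Combining with the previous paragraph gives (ii). The main technical difficulty is keeping the tilting uniform in $y$ near $y_A$ while tracking the normalizing constants with the precision provided by the phase space volume estimate; only the full sharp asymptotic, rather than merely its leading exponential order, suffices to combine the conditional weak convergence of $\Q_n$ with the marginal concentration of $Y_{n+1}$.
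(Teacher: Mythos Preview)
Your plan mirrors the paper's architecture---integrate out velocities, observe $X_{n+1}$ uniform, tilt exponentially to produce $\nu_{x,y,\lambda}$, then run a Laplace argument on the $Y_{n+1}$-marginal via the phase-space estimate of Proposition~\ref{j11.2}---but two steps fail as written. The total-variation claim is false: after centering the penalty at its maximum it becomes a bounded function of $\sqrt n\,(T_n-\bar y(y))$, which under $\nu^{\otimes n}$ has a nondegenerate Gaussian limit, so the Radon--Nikodym derivative stays bounded but does not tend to $1$ and the TV distance does not vanish. A bounded RN derivative is still enough to transfer the i.i.d.\ weak law to $\Q_n$, so your conclusion survives with a different justification; alternatively, do what the paper does and prove convergence of one- and two-particle marginals directly from the local CLT (Lemma~\ref{j15.1}) and then close with a second-moment computation on $\Q_n(A)$ (Lemma~\ref{j14.1}).

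The more serious gap is the assertion that ``$I'(y_A)=0$ is, after rearrangement, precisely \eqref{a20.2}.'' This is the crux of the whole proof and is not a rearrangement: with $\psi(y)=q_0e^{\lambda_0 u_0}/\lambda_0^{d/2+1}$ one must differentiate through the implicit dependence of $u_0(y)$ and $\lambda_0=\lambda(u_0(y),y)$ on $y$, using every identity in Lemma~\ref{lem:der} together with the constraint encoded by \eqref{j27.10}, to arrive at
\[
\sign\psi'(y)=\sign\!\left(m\,\frac{|D_b|-q_0(y)}{q_0(y)}-M\right).
\]
That page-long computation, carried out in the proof of Proposition~\ref{j13.3}, also delivers more than a local quadratic bound: combined with Theorem~\ref{j24.1} it shows $\psi$ is strictly increasing on $(R,y_A)$ and strictly decreasing on $(y_A,E/(Mg))$, which is what yields the \textit{global} concentration \eqref{j8.1}. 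Your ``quadratic upper bound near $y_A$'' would not by itself exclude competing maxima away from $y_A$.
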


The proof of Theorem \ref{a20.4} will be given in Section \ref{y19.7}.

\begin{remark}
(i)
Theorem \ref{a20.4} is a mathematical representation of Archimedes' principle.
Part (ii) of the theorem identifies the limiting 
empirical distribution of gas molecules when $n\to \infty$. 
The gas density is constant on the horizontal hyperplanes and it is exponential as a function of the height, outside the ball.

Part (i) of Theorem \ref{a20.4} and \eqref{a20.2} say that the ball is likely to float at the height such that the weight of the ball is equal to the weight of the displaced gas (i.e., \eqref{a20.2} holds), assuming that the gas is distributed as in part (ii).

(ii)
We need an extra equation \eqref{j27.10} to identify 
$\ya$ and $\lambda_A$  uniquely. 
The equation is an expression of  additivity  and conservation of energy. 
The three terms on the left hand side represent (asymptotically) the kinetic energy of the gas, the potential energy of the gas, and the potential energy of the ball. The kinetic energy of the ball is negligibly small asymptotically.

The form of
the first term on the left hand side of \eqref{j27.10}, i.e., 
$dmg/(2\lambda_A)$, representing the kinetic energy of the gas, is a manifestation of the virial theorem, which states how the energy is distributed between potential and kinetic forms.  The virial theorem is by now a classical result, discovered by Claussius in 1870 (see, for example \cite{Collins}).

Formula \eqref{j27.10} reduces to
\begin{align*} 
\frac{dmg}{2\lambda_A} + 
\frac{mg} {\lambda_A}  = E
\end{align*}
in the case when there is no ball, i.e., $R=0$ and $M=0$. In other words,  the ratio of potential to kinetic energy is $2/d$ for pure gas.

Alternatively one could say that
$dmg/(2\lambda_A)$  represents the kinetic energy, hence the heat energy, because $\lambda_A$ can be identified with the inverse temperature (see \eqref{j15.5} for a precise formula).

(iii) The heuristic meaning of the first claim of Theorem \ref{j24.1} is that there is a unique asymptotic distribution of gas if the ball rests at the bottom of the container. The second part says that if the weight of the ball is smaller than the weight of the gas displaced by the ball when it is placed at the bottom then, asymptotically, there is a unique level  at which the ball will float and the corresponding unique distribution of the gas.
\end{remark}

\section{Local Central Limit Theorem}\label{y19.3}

For any random variable $A$, let $f_A$ denote the density of $A$ (if it exists).
Suppose that random variables $\xi_k$, $k\geq 1$, are i.i.d. with $\E \xi_k =0$, $ E\xi_k^2 =1$ and density $f_{\xi_k}$.  Let  $M_3 = \E |\xi_k|^3 $ and
$S_n= \sum_{k=1}^n \xi_k/(\sqrt{n} \sigma)$. Let $\varphi(x)$ denote the standard normal density, i.e., $\varphi(x) = \frac 1 {\sqrt{2\pi}} e^{-x^2/2}$.

The following result is  Theorem 1 in \cite{SiraSaha}.

\begin{lemma}\label{j10.2}
There exists an absolute constant $C$ such that if
$f_{\xi_k}(x) \leq C_1$ for all $x\in \R$ then for all $n\geq 1$,
\begin{align}\label{j9.2}
\sup_{x\in\R}
\left| f_{S_n}(x) - \varphi(x)\right|
\leq \frac {C M_3^2 \max(1, C_1^3)}{\sqrt{n} }.
\end{align}
\end{lemma}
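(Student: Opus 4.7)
The natural tool is Fourier inversion. The bounded-density hypothesis yields, via Plancherel,
\begin{align*}
\int_\R |\phi_\xi(u)|^2\,du = 2\pi\int_\R f_{\xi_1}(x)^2\,dx \leq 2\pi C_1,
\end{align*}
where $\phi_\xi$ is the characteristic function of $\xi_1$. Thus $|\phi_\xi|^n$ is integrable for $n\geq 2$, the density $f_{S_n}$ is continuous, and the inversion formula gives
\begin{align*}
f_{S_n}(x) - \varphi(x) = \frac{1}{2\pi}\int_\R e^{-itx}\bigl[\phi_\xi(t/\sqrt n)^n - e^{-t^2/2}\bigr]\,dt.
\end{align*}
Passing the absolute value inside and taking the supremum over $x$ reduces the claim to an upper bound, of the asserted order, on $\int_\R|\phi_\xi(t/\sqrt n)^n - e^{-t^2/2}|\,dt$. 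I would split the integration range into a central region $\{|t|\leq\delta\sqrt n\}$ and a tail $\{|t|>\delta\sqrt n\}$, where $\delta$ is a small constant to be calibrated against $M_3$.

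In the central zone, the standard Edgeworth-type estimate applies. The hypotheses $\E\xi_1=0$ and $\E\xi_1^2=1$ give $\phi_\xi(u) = 1 - u^2/2 + R(u)$ with $|R(u)|\leq M_3|u|^3/6$. For $\delta$ small enough (of order $M_3^{-1}$) one has $|R(t/\sqrt n)|\leq 1/4$ on the central zone, so that $n\log\phi_\xi(t/\sqrt n)$ is well defined and, after a careful expansion,
\begin{align*}
\bigl|\phi_\xi(t/\sqrt n)^n - e^{-t^2/2}\bigr| \leq \frac{C\,M_3|t|^3}{\sqrt n}\,e^{-t^2/4}, \qquad |t|\leq\delta\sqrt n .
\end{align*}
Integration over the central zone contributes a term of order $M_3/\sqrt n$, which is within the bound asserted in \eqref{j9.2}.

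In the tail region the Gaussian part contributes a super-polynomially small error, so the real task is to control $\sqrt n\int_{|u|>\delta}|\phi_\xi(u)|^n\,du$. Here the bounded-density hypothesis is used a second time, through a uniform estimate $|\phi_\xi(u)|\leq q<1$ on $\{|u|\geq\delta\}$, with $q$ depending quantitatively on $\delta$ and $C_1$. Together with the Plancherel bound above,
\begin{align*}
\int_{|u|>\delta}|\phi_\xi(u)|^n\,du \leq q^{n-2}\int_\R|\phi_\xi(u)|^2\,du \leq 2\pi C_1\,q^{n-2},
\end{align*}
which decays geometrically in $n$ and, after multiplication by $\sqrt n$, is dominated by $C/\sqrt n$ for $n$ large.

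The principal obstacle is quantitative. To produce an \emph{absolute} constant $C$ carrying the precise dependence $M_3^2\max(1,C_1^3)$ one must track how $\delta$ and the gap $1-q$ scale jointly with $M_3$ and $C_1$; heuristically, the factor $C_1^3$ arises because the threshold at which $|\phi_\xi|$ becomes appreciably smaller than $1$ degrades polynomially in $C_1$, and after Plancherel this degradation is absorbed into a power of $C_1$ when converting the geometric decay in $n$ into the rate $1/\sqrt n$. A separate short argument is required for small $n$, where the trivial pointwise bound $f_{S_n}\leq\sqrt n\,C_1$ together with the lower bound on $C_1$ forced by $\E\xi_1^2=1$ makes the inequality direct.
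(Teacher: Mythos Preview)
The paper does not prove this lemma at all: it is quoted as Theorem~1 of \cite{SiraSaha}, and the subsequent remark merely surveys where in the literature (Petrov, Sahaidarova, Servatsjan, Statulyavichus) the bound can be located, noting that several of those sources state the inequality without proof. So there is no ``paper's own proof'' to compare against; the authors treat \eqref{j9.2} as a black box.

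Your sketch follows the classical Fourier-inversion route that underlies all of those references: Plancherel to make $|\phi_\xi|^2$ integrable, a Taylor/Edgeworth expansion on $|t|\le\delta\sqrt n$, and bounded-density input to force $\sup_{|u|\ge\delta}|\phi_\xi(u)|<1$ for the tail. That skeleton is correct and is exactly how the cited papers proceed. You are also right to flag the quantitative step as the crux: the passage from ``$q^{\,n-2}$ decays geometrically'' to ``$\sqrt n\,q^{\,n-2}\le C/\sqrt n$ for \emph{all} $n$ with an absolute $C$'' is where the specific factors $M_3^2$ and $\max(1,C_1^3)$ have to be produced, since the threshold $\delta$ scales like $M_3^{-1}$ and the gap $1-q$ degrades with $C_1$. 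Your heuristic for the $C_1^3$ is plausible but not an argument; making it one is essentially the content of Statulyavichus' Theorem~4, which the paper points to as a route to a self-contained derivation. In short: your outline matches the literature the paper cites, and the part you leave incomplete is precisely the part those references exist to supply.
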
 

\begin{remark}
There is a considerable literature on the local central limit theorem but the results are hard to extract from that literature for a number of reasons, including  absence of some journals from accessible libraries and lack of proofs in a number of publications.

The bound in
Lemma \ref{j10.2} is given in Remark 5, Section 4, Chapter VII in \cite{Petrov}. The bound is given there without a proof and it is attributed to \cite{Saha}. Unfortunately, \cite{Saha} does not contain a proof.
A similar bound is a special case of Theorem 1 in \cite{Serva} but, once again, that paper does not contain a proof.

One can derive Lemma \ref{j10.2} from Theorem 4 in \cite{Statu}.

The book \cite{Petrov} contains several versions of the local central limit theorem, see, e.g., Theorem 15 in Chapter VII. However, in each of these theorems the error is given in the form $o(1/\sqrt{n})$. One could extract a bound of the form $c/\sqrt{n}$ with an explicit formula for $c$ depending on the moments of the summands from the proof but that would be a very tedious task.

\end{remark}

\section{Microcanonical ensemble formula}\label{y19.4}

\begin{proof}[Proof of Theorem \ref{j18.1} (i)]
The formula \eqref{j10.4} is a version of a well known ``microcanonical ensemble formula,'' see, e.g.,  \cite[Sect.~1.2]{Ruelle}.
We could not find a rigorous proof that this distribution is invariant  in the  contemporary literature. A proof has been given in \cite{fermi}, a parallel project. The result proved in \cite[Prop. 5.3]{fermi} implies  that 
\begin{equation*}
f(\bx_{n+1},\by_{n+1},\bv_{n+1}):=
\left(E-\sum_{i=1}^{n+1} m_iy_ig\right)^{((n+1)d-2)/2}
\frac{ \rd \mu_{\by_{n+1}}}{\rd  \bv_{n+1}}
\left(\frac{v_1}{\sqrt{m_1}},\ldots,\frac{v_{n+1}}{\sqrt{m_{n+1}}}\right)
\end{equation*}
is the density of an invariant measure on the whole space $\R^{2(n+1)d}$.

We claim that this density restricted to $\bD_n$ is invariant for our dynamical system with reflections. It suffices to show that the specular and Lambertian reflections between the point particles, the ball and the walls of the container leave the measure invariant. 
For the specular reflections, the proof is essentially identical to that of the proof of \cite[Prop. 2.3]{fermi}. The result can be extended to Lambertian reflections of point particles from the wall, as shown in \cite[Prop. 3.3, Remark 3.4]{fermi} (see the remarks preceding and following that result).
\end{proof}

\section{Phase space volume}\label{y19.5}

We will use two different sets of notations for some quantities.
While this may create some confusion, we will explain why this convention has some advantages.

For  $n=0,1,2,\ldots$, $\gamma>0$ and $y\geq R$ we let 
\begin{align}
\label{eq:hn:def}
   \scrh_n=\scrh_n(\gamma,y) = \int_{D\setminus \cB((\wx,y),R)}\gamma r^ne^{-\gamma r}\rd x\rd r\/,
\end{align}
where $\wx \in D_b'$. 
The definition of and notation for $\scrh_n$ lead to the following very simple formula, obtained using the dominated convergence theorem, 
\begin{align}
\label{eq:hn:der}
   \dfrac{\prt \scrh_n}{\prt \gamma} = \frac{\scrh_n}{\gamma}-\scrh_{n+1}\/.
\end{align}

Some of the functions $\scrh_n$ have physical meaning, so we find it easier to memorize them if different notation is used in some cases. Specifically,
\begin{align}\label{j27.1}
\scrq(\gamma,y) &= \int_{D\setminus \cB((\wx,y),R)} \gamma e^{-\gamma r} \rd x \rd r = \scrh_0(\gamma,y),\\
\scru(\gamma,y) &=  \frac 1 {\scrq(\gamma,y)}
 \int_{D\setminus\cB((\wx,y),R)} r\gamma e^{-\gamma r} \rd x \rd r
 = \frac{\scrh_1(\gamma,y)}{\scrh_0(\gamma,y)},\label{j28.2}\\
\scrw(\gamma,y) &= \int_{D\setminus \cB((\wx,y),R)} \gamma r^2 e^{-\gamma r} \rd x \rd r = \scrh_2(\gamma,y).\label{a1.5}
\end{align}

We will prove in Lemma  \ref{j24.2} that for given $u>0$ and $y\geq R$, the equation $u=\scru(\lambda,y)$ uniquely defines $\lambda=\lambda(u,y)$. We let
\begin{align}\label{a3.1}
h_n(u,y)&= \scrh_n(\lambda(u,y),y),\quad
q(u,y)= \scrq(\lambda(u,y),y),\quad
w(u,y)= \scrw(\lambda(u,y),y).
\end{align}

\begin{lemma}\label{y21.1}
For all  $y'\geq R$ and $\gamma>0$,
\begin{align}\label{y22.3}
&\frac 1 8  (1- e^{-1/2}) \leq\frac {\scrq(\gamma,y')} {|D_b|} \leq 1,\\
& \frac 1 {200} \leq  \gamma \scru(\gamma,y') \leq \frac 8 {1- e^{-1/2}}<21,\label{y22.4}\\
& 2^{-12} e^{-1} \leq \gamma^2 \left(\frac{\scrw(\gamma,y')}{\scrq(\gamma,y')} - \scru(\gamma,y')^2\right)  \leq \frac {16} {1- e^{-1/2}} ,\label{y22.5}\\
&
2^{-21}<\frac{1- e^{-1/2}} {16} \frac 1 {200^2} \leq 
\frac{\scru(\gamma,y')^2}{\scrw(\gamma,y')/\scrq(\gamma,y') - \scru(\gamma,y')^2} \leq \frac { 2^{18} e} {(1- e^{-1/2}) ^2}
< 2^{23}.\label{y22.6}
\end{align}
\end{lemma}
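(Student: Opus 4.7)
I will prove the four inequalities of Lemma~\ref{y21.1} in order, each building on the previous. Once \eqref{y22.3}'s lower bound is established, the other upper bounds are immediate: dropping $\bone_{D\setminus\cB}$ and integrating $r^n\gamma e^{-\gamma r}$ over all of $D$ gives $\scrh_n\leq n!|D_b|/\gamma^n$ for $n=0,1,2$, whence $\scrq\leq|D_b|$, $\gamma\scru=\gamma\scrh_1/\scrq\leq|D_b|/\scrq\leq 8/(1-e^{-1/2})<21$, and $\gamma^2\scrw/\scrq\leq 16/(1-e^{-1/2})$ (the last gives the upper bound in \eqref{y22.5} via $\gamma^2(\scrw/\scrq-\scru^2)\leq\gamma^2\scrw/\scrq$). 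Finally \eqref{y22.6} is an algebraic consequence of the bounds in \eqref{y22.4}--\eqref{y22.5}.

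\textbf{Lower bound on $\scrq$.} I split on $\gamma R$. If $\gamma R\leq 1$, the crude estimate $\omega_{d-1}R^{d-1}\leq|D_b|$ on the ball's horizontal cross-section yields $\int_\cB\gamma e^{-\gamma r}\rd x\rd r\leq|D_b|(1-e^{-2\gamma R})\leq|D_b|(1-e^{-2})$, so $\scrq\geq|D_b|e^{-2}$. If $\gamma R\geq 1$, I restrict to the slab $r\in[0,1/(2\gamma)]\subset[0,R/2]$: there $|r-y'|\geq R-1/(2\gamma)\geq R/2$, so the ball cross-section has area at most $\omega_{d-1}R^{d-1}(3/4)^{(d-1)/2}\leq|D_b|\sqrt 3/2$, which forces $A(r):=|\{x\in D_b\colon(x,r)\notin\cB\}|\geq|D_b|(1-\sqrt 3/2)\geq|D_b|/8$ and hence $\scrq\geq(|D_b|/8)(1-e^{-1/2})$. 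Since $e^{-2}>(1-e^{-1/2})/8$, both cases give \eqref{y22.3}.

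\textbf{Lower bound on $\gamma\scru$.} Set $c:=(1-e^{-1/2})/16$. On $\{r\geq c/\gamma\}$ one has $\gamma r\geq c$, so
\[
\gamma\scrh_1\geq c\int_{(D\setminus\cB)\cap\{r\geq c/\gamma\}}\gamma e^{-\gamma r}\rd x\rd r\geq c\bigl(\scrq-|D_b|(1-e^{-c})\bigr).
\]
Combining $1-e^{-c}\leq c$ with the upper bound $|D_b|\leq 8\scrq/(1-e^{-1/2})$ (from \eqref{y22.3}) makes $|D_b|(1-e^{-c})\leq\scrq/2$, giving $\gamma\scru\geq c/2=(1-e^{-1/2})/32>1/200$.

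\textbf{Lower bound on the variance (the main obstacle).} Viewing $\scrw/\scrq-\scru^2$ as the variance of $r$ under the probability measure $\pi(\rd r)\propto A(r)\gamma e^{-\gamma r}\rd r$ and using the two-point identity
\[
\frac{\scrw}{\scrq}-\scru^2=\tfrac12\iint(r_1-r_2)^2\rd\pi(r_1)\rd\pi(r_2)\geq(\inf I_2-\sup I_1)^2\pi(I_1)\pi(I_2)
\]
for disjoint intervals $I_1<I_2$, I would choose $I_1,I_2$ of widths and separations of order $1/\gamma$, each of $\pi$-mass bounded below by an absolute constant. The choice depends on where the ball sits: take $I_1,I_2$ below the ball when $\gamma(y'-R)$ is bounded away from zero, above it when $\gamma(y'+R)$ is bounded, and otherwise (which forces $\gamma R$ to be moderately large) near $r=0$ using the estimate $A(r)\geq|D_b|/8$ established for \eqref{y22.3}. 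Bookkeeping the factors from the $\pi$-masses, the squared scaled gap, and the $\scrq\leq|D_b|$ normalization in each regime yields $\gamma^2(\scrw/\scrq-\scru^2)\geq 2^{-12}e^{-1}$; this case analysis, and in particular the verification that no regime drops the product below the stated constant, is the main technical content of the lemma.
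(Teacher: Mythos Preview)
Your proofs of the lower bounds in \eqref{y22.3} and \eqref{y22.4} are correct and arguably cleaner than the paper's. For \eqref{y22.3} you split on $\gamma R\lessgtr 1$ where the paper splits on $\gamma y'\lessgtr 1$; both work and land on the same constant (your use of $\omega_{d-1}R^{d-1}\leq|D_b|$ is legitimate since $\wx\in D_b'$ forces the equatorial slice of $\cB$ into $D_b$). For \eqref{y22.4} your truncation argument---cut off $r<c/\gamma$ and bound the lost mass by $|D_b|(1-e^{-c})\leq\scrq/2$ via \eqref{y22.3}---avoids any case split entirely, whereas the paper again splits on $\gamma y'\lessgtr 1$, establishes the pointwise density bounds $f_{W_2}(y)\geq\gamma e^{-\gamma y}$ for $y\geq y'+R$ and $f_{W_2}(y)\geq\tfrac18\gamma e^{-\gamma y}$ for $y\leq y'-R/2$, and integrates explicitly in each regime.

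The variance lower bound \eqref{y22.5} is where your sketch both departs from the paper and fails to deliver the stated constant. The paper's argument differs from yours in two ways. First, its split is only two-way, on $\gamma y'\lessgtr 1$: if $\gamma y'\geq 1$ then $1/(2\gamma)\leq y'/2\leq y'-R/2$, so all of $(0,1/(2\gamma))$ lies below the ball with density $\geq\tfrac18\gamma e^{-\gamma y}$; if $\gamma y'\leq 1$ then $2/\gamma\geq 2y'\geq y'+R$, so $(2/\gamma,6/\gamma)$ lies above the ball with density $\geq\gamma e^{-\gamma y}$. Your third regime is subsumed. Second, and this is the real issue, the paper does not use the two-point product bound. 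It uses the single-interval estimate
\[
\var W_2\;\geq\;\int_I(y-\E W_2)^2 f_{W_2}(y)\,\rd y\;\geq\;\Bigl(\min_{y\in I}|y-\E W_2|\Bigr)^{2}\,\pi(I),
\]
choosing $I$ by a pigeonhole step: inside any interval of length $L$ and for any prescribed point $a_0=\E W_2$, there is a subinterval of length $L/4$ whose points all lie at distance $\geq L/4$ from $a_0$. With $L=1/(2\gamma)$ and the density bound $\tfrac18\gamma e^{-\gamma y}$ this produces exactly $2^{-12}e^{-1}$ in the tight case. Your product $\pi(I_1)\pi(I_2)$ carries an extra small factor; packing two intervals and a gap into $[0,1/(2\gamma)]$ one cannot do better than a constant roughly an order of magnitude below $2^{-12}e^{-1}$, so the claim ``yields $\geq 2^{-12}e^{-1}$'' does not follow from your method. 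Your approach does give some absolute lower bound on $\gamma^2\var W_2$---and that is all the later applications (for instance the assertion $\lambda\sigma\approx 1$ in the proof of Proposition~\ref{j11.2}) actually need---but it does not prove \eqref{y22.5} as stated.
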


\begin{proof}
Let $\cB_*=\cB((\wx,y),R)$.
Elementary geometry shows that if
\begin{align*}
A= \left\{ x\in D_b: |x- \wx| < \frac{\sqrt{3}}2 R \right\}
\end{align*}
then $(D_b \setminus A ) \times (0,y'-R/2) \in D\setminus \cB_*$. 
If $\sigma(r)$ denotes the volume of a $(d-1)$-dimensional ball then, using our assumption that $d\geq 2$,
\begin{align}\label{y21.2}
\frac{|D_b \setminus A|}{|D_b|} = 1 - \frac{| A|}{|D_b|}
\geq 1 - \frac {\sigma\left(\sqrt{3}R/2\right)}{\sigma(R)} 
= 1 - \left(\frac {\sqrt{3}}{2}\right)^{d-1} \geq
1 - \frac {\sqrt{3}}{2} >\frac 1 8 .
\end{align}
We have 
\begin{align}\label{y21.3}
\scrq = \int_{D\setminus \cB_*} \gamma  e^{-\gamma r} \rd x \rd r
\geq \int_{D_b} \rd x \int_{y'+ R}^\infty \gamma  e^{-\gamma r} \rd r
= |D_b| \exp( - \gamma (y'+R)).
\end{align}
By \eqref{y21.2},
\begin{align}\label{y21.4}
\scrq &= \int_{D\setminus \cB_*} \gamma  e^{-\gamma r} \rd x \rd r
\geq \int_{D_b \setminus A} \rd x \int_0^{y'- R/2} \gamma  e^{-\gamma r} \rd r
= |D_b \setminus A|(1- \exp( - \gamma (y'-R/2)))\\
& \geq \frac 1 8 |D_b| (1- \exp( - \gamma (y'-R/2))).\notag
\end{align}
Recall that $y'\geq R$.
If $\gamma \leq 1/y'$ then \eqref{y21.3} yields
\begin{align}\label{y21.5}
\scrq \geq |D_b| \exp( - \gamma (y'+R))
\geq |D_b| \exp( - (1/y') (y'+R))
= |D_b| \exp( - 1 - R/y') \geq |D_b| e^{-2}.
\end{align}
In the case when $\gamma \geq 1/y'$, we use \eqref{y21.4} to see that
\begin{align*}
\scrq &\geq  \frac 1 8 |D_b| (1- \exp( - \gamma (y'-R/2)))
\geq  \frac 1 8 |D_b| (1- \exp( - (1/y') (y'-R/2)))\\
&= \frac 1 8 |D_b| (1- \exp( - 1 + R/(2y')))
\geq \frac 1 8 |D_b| (1- \exp( - 1 + 1/2))
= \frac 1 8 |D_b| (1- e^{-1/2}). 
\end{align*}
Since $e^{-2} > \frac 1 8  (1- e^{-1/2})$, the above estimate and \eqref{y21.5} give
\begin{align}\label{y22.7}
\scrq \geq \frac 1 8 |D_b| (1- e^{-1/2}).
\end{align}
This proves the lower bound in \eqref{y22.3}. The upper bound follows directly from the definition of $\scrq$.

We use \eqref{y22.7} to  derive the upper bound in \eqref{y22.4} as follows,
\begin{align}\label{y21.6}
\scru &=  \frac {
 \int_{D\setminus\cB_*} r\gamma e^{-\gamma r} \rd x \rd r}
{\int_{D\setminus \cB_*} \gamma e^{-\gamma r} \rd x \rd r}
= \frac 1 \scrq \int_{D\setminus\cB_*} r\gamma e^{-\gamma r} \rd x \rd r
\leq \frac 1 \scrq \int_{D} r\gamma e^{-\gamma r} \rd x \rd r\\
&= \frac 1 \scrq |D_b| \frac 1 \gamma
\leq \frac 8 {(1- e^{-1/2}) \gamma}.\notag
\end{align}

Let $W=(W_1,W_2)$ be a random vector with the distribution $\nu= \nu_{\wx,y',\gamma}$ defined in \eqref{j24.3}, where $W_1\in D_b$ and $W_2>0$. Note that
\begin{align}\label{y22.2}
\scrw/\scrq - \scru^2 =\var W_2 .
\end{align}
In view of \eqref{y21.2} and \eqref{y22.3}, 
the marginal density $f_{W_2}(y)$ satisfies
\begin{align}\label{y22.8}
f_{W_2}(y) &\leq \frac {|D_b|} \scrq \gamma e^{-\gamma y}
\leq \frac 8 {1- e^{-1/2}} \gamma e^{-\gamma y}, \qquad y >0,\\
\label{y21.7}
f_{W_2}(y) &\geq \gamma e^{-\gamma y}, \qquad y \geq y'+R,\\
f_{W_2}(y) &\geq \frac 1 8 \gamma e^{-\gamma y}, \qquad y \leq y'-R/2.
\label{y21.8}
\end{align}

It follows from \eqref{y22.8} that
\begin{align*}
\scrw/\scrq - \scru^2 =\var W_2  \leq \E W_2^2
= \int_0^\infty y^2 f_{W_2}(y) \rd y
\leq \int_0^\infty y^2 \frac 8 {1- e^{-1/2}} \gamma e^{-\gamma y} \rd y
= \frac 8 {1- e^{-1/2}} \frac 2 {\gamma^2}.
\end{align*}
This gives the upper bound in \eqref{y22.5}.

Suppose that $1/\gamma \leq y'$ and let $y'' = 1/\gamma$. 
Since $y'\geq R$ and $y''\leq y'$, we have $ y''/2 \leq y'-R/2$. Hence, we can apply \eqref{y21.8} to $y\in(0, y''/2)$ and obtain
\begin{align}\label{y22.9}
\scru &= \E W_2 \geq \int_{0}^{y''/2} y f_{W_2}(y) \rd y
\geq \int_{0}^{y''/2} y \frac 1 8 \gamma e^{-\gamma y} \rd y
= \frac 1{8\gamma} (1-e^{-y'' \gamma/2} (y'' \gamma/2+1))\\
&= \frac 1{8\gamma} (1-(3/2)e^{-1/2} )> \frac 1 {200 \gamma}.\notag
\end{align}

Suppose that $1/\gamma \geq y'$ and let $y'' = 1/\gamma$. 
Since $y'\geq R$ and $y''\geq y'$, we have $2 y'' \geq y'+R$. Hence, we can apply \eqref{y21.7} to $y>2 y''$ and obtain
\begin{align*}
\scru &= \E W_2 \geq \int_{2y''}^\infty y f_{W_2}(y) \rd y
\geq \int_{2y''}^\infty y  \gamma e^{-\gamma y} \rd y
= \frac 1{\gamma} e^{-2y'' \gamma} (2y'' \gamma+1))
= \frac 1{\gamma} 3e^{-2} > \frac 1 {3 \gamma}.\notag
\end{align*}
This and \eqref{y22.9} yield the lower bound in \eqref{y22.4}.

Suppose that $1/\gamma \leq y'$ and let $y'' = 1/\gamma$. Then for any $a_0\in \R$ there exists an interval $(a_1,a_2)\subset (0, y''/2)$ such $a_2-a_1 \geq y''/8$ and for any $y\in(a_1,a_2)$, we have
$|y - a_0| \geq  y''/8$. We will apply this observation with $a_0 = \E W_2$. 
Since $y'\geq R$ and $y''\leq y'$, we have $a_2 \leq y''/2 \leq y'-R/2$. Hence, we can apply \eqref{y21.8} to $y\in(a_1,a_2)$ and obtain
\begin{align}\label{y22.1}
\var W_2 &\geq \int_{a_1}^{a_2} |y - \E W_2| ^2 f_{W_2}(y) \rd y
\geq \int_{a_1}^{a_2} (y''/8) ^2 \frac 1 8 \gamma e^{-\gamma y} \rd y
\geq (a_2-a_1) (y''/8) ^2 \frac 1 8 \gamma e^{-\gamma a_2}\\
&\geq (y''/8) ^3 \frac 1 8 (1/y'') e^{-\gamma y''}
= (y'')^2 2^{-12} e^{-1} = 2^{-12} e^{-1} \frac 1 {\gamma^2} .\notag
\end{align}

Next suppose that $1/\gamma \geq y'$ and let $y'' = 1/\gamma$.
Then for any $b_0\in \R$ there exists an interval $(b_1,b_2)\subset (2 y'', 6y'')$ such $b_2-b_1 \geq y''$ and for any  $y\in(b_1,b_2)$, we have
$|y - b_0| \geq  y''$.
We will apply this observation with $b_0 = \E W_2$. 
Since $y'\geq R$ and $y''\geq y'$, we have $b_1 \geq 2y'' \geq y'+R$. Hence, we can apply \eqref{y21.7} to $y\in(b_1,b_2)$ and obtain
\begin{align*}
\var W_2 &\geq \int_{b_1}^{b_2} |y - \E W_2| ^2 f_{W_2}(y) \rd y
\geq \int_{b_1}^{b_2} (y'') ^2  \gamma e^{-\gamma y} \rd y
\geq (b_2-b_1) (y'') ^2  \gamma e^{-\gamma b_2}\\
&\geq (y'') ^3  (1/y'') e^{-6\gamma y''}
= (y'')^2  e^{-6} =  e^{-6} \frac 1 {\gamma^2} .
\end{align*}
Since $e^{-6} > 2^{-12} e^{-1}$, the above estimate, \eqref{y22.2} and \eqref{y22.1} imply that
\begin{align*}
\scrw/\scrq - \scru^2 =\var W_2 &\geq  2^{-12} e^{-1} \frac 1 {\gamma^2} .
\end{align*}
This yields the lower bound in \eqref{y22.5}.

The bound in \eqref{y22.6} follows from \eqref{y22.4} and \eqref{y22.5}.
\end{proof}

\begin{lemma}\label{j24.2}

(i) For any $y\geq R$, the function $\lambda \to \scru(\lambda,y)$ 
is strictly decreasing.

(ii) For every $u>0$ and $y\geq R$
there exists a unique
$\lambda>0$ satisfying $ u = \scru(\lambda,y)$.
\end{lemma}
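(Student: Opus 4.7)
The plan is to prove (i) by direct differentiation using the identity \eqref{eq:hn:der}, and then deduce (ii) from (i) combined with the two-sided bound on $\gamma\scru(\gamma,y)$ in Lemma \ref{y21.1}.

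For part (i), fix $y\geq R$ and compute $\partial_\gamma \scru$ using the quotient rule and \eqref{eq:hn:der}. Since $\partial_\gamma \scrh_n = \scrh_n/\gamma - \scrh_{n+1}$, I get
\begin{align*}
\frac{\prt \scru}{\prt \gamma}
=\frac{(\scrh_1/\gamma - \scrh_2)\scrh_0 - \scrh_1(\scrh_0/\gamma - \scrh_1)}{\scrh_0^2}
= \frac{\scrh_1^2 - \scrh_0\scrh_2}{\scrh_0^2}
= -\left(\frac{\scrw}{\scrq} - \scru^2\right).
\end{align*}
By \eqref{y22.2} this equals $-\var W_2$ where $W_2$ is the height-marginal of $\nu_{\wx,y,\gamma}$, and the lower bound in \eqref{y22.5} of Lemma \ref{y21.1} shows
\begin{align*}
\frac{\prt \scru}{\prt \gamma} \leq -\frac{2^{-12} e^{-1}}{\gamma^2} < 0
\end{align*}
for every $\gamma>0$ and every $y\geq R$. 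In particular $\gamma\mapsto \scru(\gamma,y)$ is (continuously differentiable and) strictly decreasing, proving (i).

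For part (ii), strict monotonicity from (i) already gives uniqueness once existence is established. For existence, observe that $\scru(\cdot,y)$ is continuous on $(0,\infty)$, and from the two-sided bound \eqref{y22.4} of Lemma \ref{y21.1},
\begin{align*}
\frac{1}{200\gamma}\leq \scru(\gamma,y)\leq \frac{21}{\gamma},
\end{align*}
so $\scru(\gamma,y)\to \infty$ as $\gamma\downarrow 0$ and $\scru(\gamma,y)\to 0$ as $\gamma\to\infty$. The intermediate value theorem then yields, for any $u>0$, a $\lambda>0$ with $\scru(\lambda,y)=u$; the strict monotonicity from (i) forces this $\lambda$ to be unique.

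The only mildly delicate point is the exchange of differentiation and integration needed to justify \eqref{eq:hn:der}, but this is a routine dominated convergence argument (for $\gamma$ in any compact subinterval of $(0,\infty)$ the integrand and its $\gamma$-derivative are bounded by an integrable majorant of the form $C(r+1)e^{-\gamma_0 r/2}$), and the authors invoke it without comment just after \eqref{eq:hn:def}. No real obstacle arises; the identity $\partial_\gamma \scru = -\var W_2$ is essentially the only substantive step.
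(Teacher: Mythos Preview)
Your proof is correct and follows essentially the same route as the paper: both compute $\partial_\lambda(\scrh_1/\scrh_0)=(\scrh_1^2-\scrh_0\scrh_2)/\scrh_0^2$ and then use continuity plus the endpoint behavior of $\scru$ for (ii). The only minor variation is that the paper invokes the Cauchy--Schwarz inequality directly to conclude $\scrh_1^2-\scrh_0\scrh_2<0$, whereas you recognize the expression as $-\var W_2$ and cite the quantitative lower bound \eqref{y22.5}; similarly, for the limits in (ii) the paper re-derives them from \eqref{y22.3} while you quote the ready-made bound \eqref{y22.4}.
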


\begin{proof}
Recall $\scrh_n$ defined in \eqref{eq:hn:def} and formula \eqref{eq:hn:der}. We have
\begin{align*}
  \dfrac{\prt }{\prt \lambda}\left(\frac {
 \int_{D\setminus\cB_*} r\lambda e^{-\lambda r} \rd x \rd r}
{\int_{D\setminus \cB_*} \lambda e^{-\lambda r} \rd x \rd r}\right) = \dfrac{\prt }{\prt \lambda}\left(\frac{\scrh_1}{\scrh_0}\right)
= \frac{\scrh_1^2-\scrh_2\scrh_0}{\scrh_0^2}\/,
\end{align*}
which is strictly negative by the Cauchy-Schwarz inequality and, consequently, $\lambda \to \scru(\lambda,y) =\scrh_1(\lambda,y)/\scrh_0(\lambda,y)$ is strictly decreasing for every $y\geq R$. This proves (i).

 Note that 
\begin{align*}
\scrh_1(\lambda,y) =
\int_{D\setminus \cB_*}\lambda r e^{-\lambda r}\rd x\rd r
\leq \int_{D}\lambda r e^{-\lambda r}\rd x\rd r = \frac {|D_b|} \lambda.
\end{align*}
This and \eqref{y22.3} imply that
\begin{align*}
 \scru(\lambda,y)=  \frac{\scrh_1(\lambda,y)}{\scrh_0(\lambda,y)} &\leq \frac {|D_b|} {\lambda \scrq} \leq   
   \frac{8}{\lambda(1-e^{1/2})}\to 0\/, \quad \text{  as  }\lambda\to\infty.
\end{align*} 
On the other hand, using \eqref{y22.3} again, when $\lambda \to 0$,
\begin{align*} 
 \scru(\lambda,y)=	 \frac{\scrh_1(\lambda,y)}{\scrh_0(\lambda,y)} & = \frac {\scrh_1(\lambda,y)}{\scrq(\lambda,y)}\geq \frac{|D_b|} \scrq \int_{y+R}^\infty \lambda re^{-\lambda r}\rd r 
	 \geq \frac{1}{\lambda} \int_{\lambda(y+R)}se^{-s}\rd s \to \infty\/. 
\end{align*} 
We conclude that for given $u>0$ and $y\geq R$ there exists a unique $\lambda=\lambda(u,y)>0$ such that $ u = \scru(\lambda,y)$ holds.
\end{proof}
	
\begin{lemma}
  \label{lem:der}
  Recall that $\lambda(u,y)$ denotes the solution to $ u = \scru(\lambda,y)$, and recall \eqref{j27.1}-\eqref{a3.1}. We have
	\begin{align}
	  \label{eq:l:der:u}
		  \dfrac{\prt \lambda}{\prt u} &= \frac{q}{qu^2-w}\/,\\
		\label{eq:l:der:y}
		  \dfrac{\prt \lambda}{\prt y} &= \dfrac{\lambda u |D_b|-q}{qu^2-w}\/,\\
		\label{eq:p:der:u}
		  \dfrac{\prt q}{\prt u} &= \frac{q}{\lambda}(1-\lambda u)\dfrac{\prt \lambda}{\prt u}\/,\\
		\label{eq:p:der:y}
		 \dfrac{\prt q}{\prt y} &= \frac{q}{\lambda}(1-\lambda u)\dfrac{\prt \lambda}{\prt y}+\lambda (|D_b|-q)\/,\\
		\label{eq:pel:der:u}
		\dfrac{\prt }{\prt u}\left(\frac{qe^{\lambda u}}{\lambda}\right) &= qe^{\lambda u}\/.
	\end{align}
\end{lemma}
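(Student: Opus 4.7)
All five identities follow from implicit differentiation of $u=\scru(\lambda,y)$, the chain rule applied to $q(u,y)=\scrh_0(\lambda(u,y),y)$, and the basic formula \eqref{eq:hn:der} for $\prt \scrh_n/\prt \gamma$. I would proceed in the order: (i) compute $\prt \scrh_n/\prt y$ for $n=0,1$; (ii) assemble $\prt \scru/\prt \lambda$ and $\prt \scru/\prt y$ by the quotient rule; (iii) invert to get \eqref{eq:l:der:u} and \eqref{eq:l:der:y}; (iv) apply the chain rule to $q=\scrh_0(\lambda(u,y),y)$ to derive \eqref{eq:p:der:u} and \eqref{eq:p:der:y}; and (v) deduce \eqref{eq:pel:der:u} as a purely algebraic consequence of \eqref{eq:p:der:u}.

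\textbf{Main obstacle: the $y$-derivative.} The integration domain $D\setminus\cB((\wx,y),R)$ depends on $y$, so $\prt \scrh_n/\prt y$ is the only non-routine ingredient. I would transfer the $y$-dependence from the domain to the integrand via the translation $r\mapsto r+y$:
\begin{align*}
\int_{\cB((\wx,y),R)} f(x,r)\,\rd x\,\rd r = \int_{\cB((\wx,0),R)} f(x,r+y)\,\rd x\,\rd r,
\end{align*}
which yields $\prt_y\int_{\cB((\wx,y),R)} f\,\rd x\,\rd r = \int_{\cB((\wx,y),R)}\prt_r f\,\rd x\,\rd r$. Since $\int_D \gamma r^n e^{-\gamma r}\,\rd x\,\rd r$ is independent of $y$, writing $\int_{D\setminus\cB}=\int_D-\int_{\cB}$ and using Fubini to compute $\int_D \gamma e^{-\gamma r}\,\rd x\,\rd r=|D_b|$ and $\int_D \gamma r e^{-\gamma r}\,\rd x\,\rd r=|D_b|/\gamma$ produces, after taking $f=\gamma r^n e^{-\gamma r}$ for $n=0,1$,
\begin{align*}
\prt_y \scrh_0 = \gamma(|D_b|-\scrh_0), \qquad \prt_y \scrh_1 = \scrh_0 - \gamma \scrh_1.
\end{align*}

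\textbf{Assembly.} Combining the above with the quotient rule and \eqref{eq:hn:der} yields
\begin{align*}
\prt_\lambda \scru = \frac{\scrh_1^2-\scrh_0 \scrh_2}{\scrh_0^2}, \qquad \prt_y \scru = \frac{\scrh_0^2-\gamma \scrh_1 |D_b|}{\scrh_0^2}.
\end{align*}
At $\gamma=\lambda(u,y)$, the substitutions $\scrh_0=q$, $\scrh_1=uq$, $\scrh_2=w$ turn these into $(qu^2-w)/q$ and $(q-\lambda u|D_b|)/q$ respectively. Differentiating the identity $u=\scru(\lambda(u,y),y)$ in $u$ and in $y$ and solving then produces \eqref{eq:l:der:u} and \eqref{eq:l:der:y}. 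The chain rule for $q=\scrh_0(\lambda,y)$ together with $\prt_\gamma \scrh_0|_{\gamma=\lambda}=q/\lambda-uq=(q/\lambda)(1-\lambda u)$ and the formula for $\prt_y \scrh_0$ above gives \eqref{eq:p:der:u} and \eqref{eq:p:der:y}. Finally, for \eqref{eq:pel:der:u} the product rule gives
\begin{align*}
\prt_u\!\left(\frac{q e^{\lambda u}}{\lambda}\right) = \frac{e^{\lambda u}}{\lambda}\,\prt_u q + \frac{q u e^{\lambda u}}{\lambda}\,\prt_u\lambda + q e^{\lambda u} - \frac{q e^{\lambda u}}{\lambda^2}\,\prt_u\lambda,
\end{align*}
and substituting \eqref{eq:p:der:u} into the first term cancels the two $\prt_u\lambda$ contributions and leaves $q e^{\lambda u}$.
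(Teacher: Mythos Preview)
Your proposal is correct and follows essentially the same approach as the paper: both compute $\prt_y\scrh_0$ and $\prt_y\scrh_1$ by translating the ball integral, use \eqref{eq:hn:der} for the $\lambda$-derivatives, implicitly differentiate the defining relation of $\lambda(u,y)$, and then apply the chain rule to $q=\scrh_0(\lambda(u,y),y)$. The only cosmetic difference is that the paper differentiates the relation in the form $u\,\scrh_0=\scrh_1$ directly, whereas you first compute $\prt_\lambda\scru$ and $\prt_y\scru$ via the quotient rule and then invert; the algebra is identical.
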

\begin{proof}
Comparing \eqref{eq:hn:def} and \eqref{j27.1}-\eqref{a3.1}, we see that $q=h_0$, $uh_0=h_1$ and $w=h_2$.

 Using   \eqref{eq:hn:der} and differentiating the identity
\begin{align}
   \label{eq:h01:rel}
    u\scrh_0(\lambda(u,y),y) = \scrh_1(\lambda(u,y),y)
\end{align}
with respect to $u$ we obtain
\begin{align*}
  &\scrh_0+u\left(\frac{\scrh_0}{\lambda}-\scrh_1\right)\dfrac{\prt \lambda}{\prt u} = \left(\frac{\scrh_1}{\lambda}-\scrh_2\right)\dfrac{\prt \lambda}{\prt u},\\
	&\dfrac{\prt \lambda}{\prt u} = \scrh_0\left(\frac{\scrh_1}{\lambda}-\scrh_2-\frac{u\scrh_0}{\lambda}+u\scrh_1\right)^{-1} = \frac{q}{qu^2-w}\/, 
\end{align*}
where we used the facts that $w=h_2$ and $uq=uh_0=h_1$. 
This proves \eqref{eq:l:der:u}.

We prove \eqref{eq:p:der:u} as follows,
\begin{align*}
   \dfrac{\prt q}{\prt u} = \dfrac{\prt \scrh_0}{\prt \lambda}\dfrac{\prt \lambda}{\prt u} = \left(\frac{\scrh_0}{\lambda}-\scrh_1\right)\dfrac{\prt \lambda}{\prt u}  = \frac{q}{\lambda}(1-\lambda u)\dfrac{\prt \lambda}{\prt u}\/.
\end{align*}

We use the above formula in the following proof of
\eqref{eq:pel:der:u},
\begin{equation*}
  \dfrac{\prt}{\prt u}\left(\frac{qe^{\lambda u}}{\lambda}\right) =\frac{e^{\lambda u}}{\lambda^2}\left(q(1-\lambda u)\dfrac{\prt \lambda}{\prt u}+q\lambda \left(\lambda+u\dfrac{\prt \lambda}{\prt u}\right)-q\dfrac{\prt \lambda}{\prt u}\right)=qe^{\lambda u}\/.
\end{equation*}

Since $\int_0^\infty \lambda r^n e^{-\lambda r} \rd r = n!/\lambda^n$ and the volume of the $(d-1)$-dimensional sphere with radius $\rho$ is $\pi^{(d-1)/2} \rho^{d-1}/ \Gamma(\frac{d+1}{2})$,
\begin{align*}
  \scrh_n(\lambda,y) &= \frac{|D_b|n!}{\lambda^n} - \frac{\pi^{(d-1)/2}}{\Gamma(\frac{d+1}{2})}\int_{y-R}^{y+R}\lambda r^ne^{-\lambda r}(R^2-(y-r)^2)^{(d-1)/2}\rd r\\
	&= \frac{|D_b|n!}{\lambda^n} - \frac{\pi^{(d-1)/2}}{\Gamma(\frac{d+1}{2})}e^{-\lambda y}\int_{-R}^{R}\lambda (s+y)^ne^{-\lambda s}(R^2-s^2)^{(d-1)/2}\rd s.
\end{align*}
Consequently, by the dominated convergence theorem, we obtain
\begin{align}
 \dfrac{\prt \scrh_n}{\prt y} &= \lambda\left(\frac{|D_b|n!}{\lambda^n}-\scrh_n\right)-n\left(\frac{|D_b|(n-1)!}{\lambda^{n-1}}-\scrh_{n-1}\right) .
\end{align}
In particular,
\begin{align}\label{a1.6}
  \dfrac{\prt \scrh_0}{\prt y} = \lambda(|D_b|-\scrh_0)\/,\quad \dfrac{\prt \scrh_1}{\prt y} = (|D_b|-\lambda \scrh_1)-(|D_b|-\scrh_0) = \scrh_0 -\lambda \scrh_1\/.
\end{align}
Differentiation of \eqref{eq:h01:rel} with respect to $y$ gives
\begin{align*}
  &u\left(\dfrac{\prt \scrh_0}{\prt \lambda}\dfrac{\prt \lambda}{\prt y}+\dfrac{\prt \scrh_0}{\prt y}\right) = \dfrac{\prt \scrh_1}{\prt \lambda}\dfrac{\prt \lambda}{\prt y}+\dfrac{\prt \scrh_1}{\prt y},\\
	&\dfrac{\prt \lambda}{\prt y} = \left(\dfrac{\prt \scrh_1}{\prt y}-u\dfrac{\prt \scrh_0}{\prt y}\right)\left(u\dfrac{\prt \scrh_0}{\prt \lambda}-\dfrac{\prt \scrh_1}{\prt \lambda}\right)^{-1} \/.
\end{align*}
We now use \eqref{eq:hn:der} and \eqref{a1.6} to see that
\begin{align*}
	&\dfrac{\prt \lambda}{\prt y} 
	= \frac{\lambda u |D_b|-q}{qu^2-w}\/.
\end{align*}
This proves \eqref{eq:l:der:y}.

We apply \eqref{eq:hn:der} and \eqref{a1.6} once again to prove \eqref{eq:p:der:y},
\begin{align*}
   \dfrac{\prt q}{\prt y} = \dfrac{\prt \scrh_0}{\prt \lambda}\dfrac{\prt \lambda}{\prt y}+\dfrac{\prt \scrh_0}{\prt y} = \frac{q}{\lambda}(1-\lambda u)\dfrac{\prt \lambda}{\prt y}+\lambda(|D_b|-q)\/.
\end{align*} 
\end{proof}

\begin{proof}[Proof of Theorem \ref{j24.1}]
In view of \eqref{j27.1}-\eqref{j28.2},
the equations \eqref{a20.2}-\eqref{j27.10} can be written in this form,
\begin{align}\label{j28.3}
 &M = K(\lambda,y) := m\frac {|D_b| - \scrq( \lambda,y)}
 {\scrq( \lambda,y) }
 = m\left(\frac{|D_b| }{ \scrq( \lambda,y)} -1\right) ,\\
&G(\lambda,y):=\frac{dmg}{2\lambda} + mg \scru(\lambda,y) 
+ Mg y = E. \label{j28.4}
\end{align}

It is easy to see that all partial derivatives of any order of the functions 
$K(\lambda,y)$ and $G(\lambda,y)$
exist and are continuous. 

Consider any $y\geq R$ such that $Mgy < E$. For sufficiently small $\lambda >0$, $dmg/(2\lambda) >E$, so for some $\lambda>0$, $G(\lambda,R) >E$. 
Our assumption that $Mgy < E$ and  \eqref{y22.4} imply that  for very large $\lambda$, $G(\lambda,R ) < E$. By continuity of $G(\lambda,y)$, there exists $\lambda$ such that $G(\lambda,y) = E$. Let $\lambda_y$ denote the smallest $\lambda$ with this property.

Part (i) of the lemma holds true because we have assumed that $MgR < E$. Therefore, we can take $\lambda_*=\lambda_R$. 

We have $\lim_{y \uparrow E/(Mg)} \lambda _y = \infty$ because the term $Mgy$ in the formula for $G(\lambda,y)$ approaches $E$, so the first term, $dmg/(2 \lambda_y)$, must go to 0.

By the assumptions of part (ii) of the lemma, $K( \lambda_R,R)=K( \lambda_*,R) >M$. It is easy to see that
the function $q(\lambda,y)$ converges to $|D_b|$ when $\lambda \to \infty$, no matter how $y$ and $\lambda$ are related. Hence $\lim_{y\uparrow E/(Mg)} K(\lambda_y,y) =0$. By continuity of $K(\lambda,y)$, there exists $y$ such that $K(\lambda_{y},y)=M$. 
Let $y_A$ be the smallest $y$ with this property and let $\lambda_A = \lambda_{y_A}$. Note that $R < y_A < E/(Mg)$.

It remains to prove uniqueness of the solution $(y,\lambda)$ to \eqref{a20.2}-\eqref{j27.10}.

Let $z(y) = \frac{E-Mgy}{mg}$.
By Lemma \ref{j24.2}, the function 
$$
  \kappa_y(u) := u+\dfrac{d}{2\lambda(u,y)}-z(y)
$$	
is a strictly increasing function of $u$. By \eqref{y22.4}, $\kappa_y(0^+)=-z(y)$ and $\kappa_y(z(y))> 0$, so for every given $y\geq R$ there exists a unique $u=u(y)$ such that
\begin{align}\label{a3.2}
u(y)=z(y)-\dfrac{d}{2\lambda(u(y),y)}.
\end{align}
Comparing this formula to \eqref{j28.4}, we see that we must have $\scru(\lambda,y) = u(y)$. It will suffice to show that there is at most one $y$ such that $\scrq(\lambda(u(y),y),y)=q(u(y),y)$ satisfies \eqref{j28.3}.

Assuming that $u(y)$ satisfies \eqref{a3.2},
$$
  \dfrac{\rd  u}{\rd  y} = -\frac{M}{m}+\frac{d}{2\lambda^2}\left(\dfrac{\prt \lambda}{\prt u}\dfrac{\prt u}{\prt y}+\dfrac{\prt \lambda}{\prt y}\right).
$$
Hence,
\begin{align}\label{a7.10}
&   \dfrac{\rd  u}{\rd  y}\left(1-\frac{d}{2\lambda^2}\dfrac{\prt \lambda}{\prt u}\right) =  -\frac{M}{m}+\frac{d}{2\lambda^2}\dfrac{\prt \lambda}{\prt y},\\
&  \dfrac{\rd  u}{\rd  y} = \dfrac{-\dfrac{M}{m}+\dfrac{d}{2\lambda^2}\dfrac{\prt \lambda}{\prt y}}{1-\dfrac{d}{2\lambda^2}\dfrac{\prt \lambda}{\prt u}}.\notag
\end{align}
We use this formula and \eqref{eq:p:der:u}-\eqref{eq:p:der:y}
to see that
\begin{align}\label{j31.2}
   \dfrac{\rd }{\rd y}q(u(y),y) &= \dfrac{\prt q}{\prt u}\dfrac{\prt u}{\prt y}+\dfrac{\prt q}{\prt y}= \frac{q}{\lambda}(1-\lambda u) \left(\dfrac{\prt \lambda}{\prt u}\dfrac{\rd  u}{\rd  y}+\dfrac{\prt \lambda}{\prt y}\right)+\lambda(|D_b|-q)\\
&= \frac{q}{\lambda}(1-\lambda u) \left(\dfrac{\prt \lambda}{\prt u}\dfrac{-\dfrac{M}{m}+\dfrac{d}{2\lambda^2}\dfrac{\prt \lambda}{\prt y}}{1-\dfrac{d}{2\lambda^2}\dfrac{\prt \lambda}{\prt u}}+\dfrac{\prt \lambda}{\prt y}\right)+\lambda(|D_b|-q).\notag
\end{align}	

Lemma \ref{j24.2} (i) implies that $\prt \lambda/ \prt u <0$ so $1-\frac{d}{2\lambda^2}\frac{ \prt \lambda}{ \prt u}>0$. Since we are interested only in the sign of $\rd q/\rd y$, it will suffice to analyze $A := \frac{\rd q}{\rd y}\left(1-\frac{d}{2\lambda^2}\frac{ \prt \lambda}{ \prt u}\right)$.
Multiplying both sides of \eqref{j31.2} by  $1-\frac{d}{2\lambda^2}\frac{ \prt \lambda}{ \prt u}$, we obtain
\begin{align*}
   &A=\frac{q}{\lambda}(1-\lambda u) \left(\dfrac{ \prt \lambda}{ \prt u}\left(-\frac{M}{m}+\dfrac{d}{2\lambda^2}\dfrac{ \prt \lambda}{ \prt y}\right)+\dfrac{ \prt \lambda}{ \prt y}\left(1-\dfrac{d}{2\lambda^2}\dfrac{ \prt \lambda}{ \prt u}\right)\right)+\lambda(|D_b|-q)\left(1-\dfrac{d}{2\lambda^2}\dfrac{ \prt \lambda}{ \prt u}\right)\\
	&= \frac{q}{\lambda}(1-\lambda u)\left(-\frac{M}{m}\dfrac{ \prt \lambda}{ \prt u}+\dfrac{ \prt \lambda}{ \prt y}\right)+\lambda(|D_b|-q)\left(1-\dfrac{d}{2\lambda^2}\dfrac{ \prt \lambda}{ \prt u}\right)\\
	&= \frac{q}{\lambda(w-qu^2)}\left((1-\lambda u)\left(\frac{Mq}{m}+q-\lambda u |D_b|\right)+\frac{d}{2}(|D_b|-q)\right)+\lambda(|D_b|-q)\\
	&=\frac{q|D_b|}{\lambda(w-qu^2)}\left((1-\lambda u)^2+(1-\lambda u)\left(\frac{q}{|D_b|}(1+M/m)-1\right)+\frac{d}{2}\left(1-\frac{q}{|D_b|}\right)\right)+\lambda(|D_b|-q).
\end{align*}
If $(q/|D_b|)(1+M/m)-1=0$ then
\begin{align*}
   A=\frac{q|D_b|}{\lambda(w-qu^2)}\left((1-\lambda u)^2+\frac{d}{2}\left(1-\frac{q}{|D_b|}\right)\right)+\lambda(|D_b|-q).
\end{align*}
According to \eqref{y22.2}, $w-qu^2 >0$. Since we assume that $R>0$, $|D_b|-q>0$. It follows that $A>0$ and, therefore, $\rd q/\rd y>0$ if $(q/|D_b|)(1+M/m)-1=0$. In other words, $\rd q/\rd y>0$ whenever $q(u(y),y) = |D_b|/(1+M/m)$. A smooth function cannot cross a level multiple times if its derivative is strictly positive at every crossing point. We have proved that there is at most one $y$ such that $q(u(y),y)$ satisfies \eqref{j28.3}.
\end{proof}

Recall \eqref{a1.4} and for $u>0$, $y\geq R$ and $x\in D_b'$ let
\begin{align}\notag
 \bD_n(y,u) &=\Bigg \{( \bx_n,x_{n+1},\by_n)\in D_b^{n}\times D_b' \times \R^n:  \frac 1 n \sum_{i=1}^n y_i = u,\\
&\qquad ( x_k,  y_k) \in D\setminus \cB((x_{n+1},y),R), \ k=1,\dots n\Bigg\}, \notag\\
 \bD_n(x,y,u) &=\Bigg \{( \bx_n,\by_n)\in D_b^{n} \times \R^n:  \frac 1 n \sum_{i=1}^n y_i = u,\label{j13.2}\\
&\qquad ( x_k,  y_k) \in D\setminus \cB((x,y),R), \ k=1,\dots n\Bigg\} .\notag
\end{align}

\begin{proposition}\label{j11.2}
Consider $u>0$, $y\geq R$ and $x\in D_b'$. 
Let $\Vol_n(x,y,u)$ be the $(nd-1)$-dimensional volume of 
$\bD_n(x,y,u)$ and $\Vol_n(y,u)$ be the $((n+1)d-2)$-dimensional volume of 
$\bD_n(y,u)$. Let
$\lambda=\lambda(u,y)>0$ be the solution to $ u = \scru(\lambda,y)$
and recall $q=q(u,y)$ defined in \eqref{j27.1} and \eqref{a3.1}.
Then for some absolute constants $0<c_1,c_2,c_3,c_4<\infty$ and $n_1$, for all $n\geq n_1$,
\begin{align}\label{j19.1}
c_1|D_b| n^{1-n} 
\left( \frac{ q e^{\lambda u}}{\lambda}\right)^{n-1}\leq
\Vol_n(x,y,u) &\leq c_2|D_b| n^{1-n} 
\left( \frac{ q e^{\lambda u}}{\lambda}\right)^{n-1}, \\
 c_3 |D_b'|\cdot|D_b| n^{1-n} 
\left( \frac{ q e^{\lambda u}}{\lambda}\right)^{n-1}\leq
\Vol_n(y,u) &\leq  c_4  |D_b'|\cdot|D_b| n^{1-n} 
\left( \frac{ q e^{\lambda u}}{\lambda}\right)^{n-1}.\label{j10.6}
\end{align}
\end{proposition}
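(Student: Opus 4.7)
The plan is to express $\Vol_n(x,y,u)$ as the density at $nu$ of a sum of $n$ i.i.d.\ random variables under a suitably exponentially tilted law, and then to estimate that density via the local central limit theorem (Lemma~\ref{j10.2}). For fixed $x\in D_b'$ and $y\ge R$, let $(X_i,Y_i)$ be i.i.d.\ with probability density
\[
  f(x',y')=\frac{\lambda}{q}\,\bone_{D\setminus\cB((x,y),R)}(x',y')\,e^{-\lambda y'},
\]
where $\lambda=\lambda(u,y)$ is supplied by Lemma~\ref{j24.2} so that $\E Y_i=\scru(\lambda,y)=u$ and $\var Y_i=w/q-u^2=:\sigma^2$. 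Applying the coarea formula to $T(\bx_n,\by_n)=\sum_{i=1}^n y_i$, whose gradient has Euclidean norm $\sqrt n$, and using that $\prod_i f(x_i,y_i)$ equals the constant $\lambda^n e^{-\lambda n u}/q^n$ on the slice $\{T=nu\}\cap(D\setminus\cB((x,y),R))^n$, one obtains the key identity
\[
  \Vol_n(x,y,u)\;=\;\sqrt n\,f_{S_n}(nu)\left(\frac{qe^{\lambda u}}{\lambda}\right)^n,\qquad S_n=\sum_{i=1}^n Y_i.
\]

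Next, I would apply Lemma~\ref{j10.2} to the standardized summands $\xi_i=(Y_i-u)/\sigma$. Lemma~\ref{y21.1} supplies the absolute relations $\lambda u\asymp 1$, $\sigma\lambda\asymp 1$, and $q\asymp|D_b|$, from which both $\sup f_{\xi_i}$ and $\E|\xi_i|^3$ are controlled by absolute constants uniformly in $y\ge R$ and $u>0$. Lemma~\ref{j10.2} then produces $f_{\sum_i\xi_i/\sqrt n}(0)=(2\pi)^{-1/2}+O(n^{-1/2})$, so that for all $n$ exceeding some absolute threshold $n_1$,
\[
  f_{S_n}(nu)\;=\;\frac{1}{\sigma\sqrt n}\,f_{\sum_i\xi_i/\sqrt n}(0)\;\asymp\;\frac{\lambda}{\sqrt n}.
\]
Substituting into the identity above, the two $\sqrt n$ factors cancel, and using $q\asymp|D_b|$ together with $e^{\lambda u}\asymp 1$ from Lemma~\ref{y21.1} once more gives
\[
  \Vol_n(x,y,u)\;\asymp\;qe^{\lambda u}\left(\frac{qe^{\lambda u}}{\lambda}\right)^{n-1}\;\asymp\;|D_b|\left(\frac{qe^{\lambda u}}{\lambda}\right)^{n-1},
\]
which is the two-sided bound \eqref{j19.1}.

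For \eqref{j10.6}: because $D=D_b\times[0,\infty)$ has vertical walls, the simultaneous horizontal shift $x_i\mapsto x_i+c$ combined with $x\mapsto x+c$ is an isometry mapping $\bD_n(x,y,u)$ bijectively onto $\bD_n(x+c,y,u)$. Hence $\Vol_n(x,y,u)$ is independent of $x\in D_b'$, and Fubini gives $\Vol_n(y,u)=|D_b'|\,\Vol_n(x,y,u)$, transferring the bound directly.

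The main obstacle is ensuring that Lemma~\ref{j10.2} applies with genuinely absolute constants uniformly over all admissible $(y,u)$; without the scale-invariant estimates of Lemma~\ref{y21.1} on $\lambda u$, $\sigma\lambda$, and $q/|D_b|$, the error term $CM_3^2\max(1,C_1^3)/\sqrt n$ in Lemma~\ref{j10.2} could degenerate as $y$ or $u$ drift toward extreme values, spoiling the uniformity of the bounds in \eqref{j19.1} and \eqref{j10.6}.
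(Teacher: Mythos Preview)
Your approach is essentially the paper's: exponentially tilt so that $\E Y_i=u$, apply the local CLT (Lemma~\ref{j10.2}) at the mean, and use Lemma~\ref{y21.1} to make the error term absolute. Your coarea identity is a more direct route than the paper's, which instead computes $\Vol(H)$ via the simplex volume formula and $f_{S^1_n}(u)$ via the gamma density and then takes their ratio; the paper also first normalizes to $|D_b|=1$ and rescales, a step your uniform use of Lemma~\ref{y21.1} renders unnecessary.

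Two points to flag. First, your final display gives $\Vol_n(x,y,u)\asymp |D_b|\,(qe^{\lambda u}/\lambda)^{n-1}$, which does \emph{not} literally match the stated \eqref{j19.1} because of the factor $n^{1-n}$ there. Your computation is in fact the correct one: the simplex $\{t_i>0,\ \sum_i t_i=nu\}$ has $(n-1)$-volume $(nu)^{n-1}\sqrt n/(n-1)!$, whereas the paper's derivation drops the factor $n^{n-1}$, producing the spurious $n^{1-n}$ in the statement (which cancels in every subsequent use anyway). Second, your translation argument for \eqref{j10.6} is not valid: $D_b$ is bounded, so $x_i\mapsto x_i+c$ does not carry $D_b^n$ into itself, and hence does not map $\bD_n(x,y,u)$ onto $\bD_n(x+c,y,u)$. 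This is harmless, though, because you do not actually need $x$-independence: the constants in \eqref{j19.1} are already independent of $x\in D_b'$, so integrating over $x_{n+1}\in D_b'$ via Fubini yields \eqref{j10.6} directly, which is exactly how the paper proceeds.
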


\begin{proof}
First note that \eqref{j10.6} is an immediate corollary of \eqref{j19.1} so it will suffice to prove \eqref{j19.1}.

Let $\cB_*=\cB((x,y),R)$,
\begin{align*}
H &=\{(z_1,t_1, \dots,z_n, t_n):( t_1+ \dots + t_n)/n=u; t_k>0,
z_k \in D_b,\text{  for } 1\leq k \leq n\},\\
H_*&=H \cap (D\setminus \cB_*)^n .
\end{align*}

Let $  (Z^{1 }_1,T^{1 }_1),\ldots,(Z^{1 }_{n},T^{1 }_n)$ be i.i.d., with $Z^{1 }_i \in \R^{d-1}$ and $T^{1 }_i >0$ for $1\leq i \leq n$. Assume that $T^{1 }_i$ and $Z^{1 }_i$ are independent, $Z^{1 }_i$ has the uniform distribution in $D_b$, and $T^{1 }_i$  has the exponential distribution with parameter $\lambda$.

Let $f_1(z_1,t_1, \dots,z_n, t_n)$ be the density of $\left((Z^{1 }_1,T^{1 }_1),\ldots,(Z^{1 }_{n},T^{1 }_n)\right)$.
We have 
\begin{align}\label{j23.3}
f_1(z_1,t_1, \dots,z_n, t_n)
= |D_b|^{-n} \lambda ^n \exp(-\lambda(t_1+ \dots + t_n)),
\end{align}
for $z_i \in D_b$, $t_i >0$, $1\leq i \leq n$. Note that the density $f_1$ is  constant on  $H$.

Let $(Z^{2 }_1,T^{2 }_1),\ldots,(Z^{2 }_{n},T^{2 }_n)$ be i.i.d., with $( Z^{2 }_i,  T^{2 }_i)$ being distributed as $( Z^{1 }_i,  T^{1 }_i)$ conditioned by 
$\{(Z^{1 }_i,T^{1 }_i)\notin \cB_*\}$.
  By the definition of $q$ and \eqref{j23.3}, the density $f_2$ of $((Z^{2 }_1,T^{2 }_1),\ldots,(Z^{2 }_{n},T^{2 }_n))$ is given by
\begin{align}\label{j23.4}
f_2(z_1,t_1, \dots,z_n, t_n)
= \left(\frac{q}{|D_b|}\right)^{-n} |D_b|^{-n} \lambda^n \exp(-\lambda(t_1+ \dots + t_n))
\end{align}
on the set $(D\setminus \cB_*)^n$.

Let $S^1_n = \frac 1 n \sum_{k=1}^n T^1_k = \sum_{k=1}^n (T^1_k/n)$. The distribution of $T^1_k/n$ is exponential with mean $(n\lambda)^{-1}$, so the distribution of $S^1_n$ is gamma with the density $f_{S^1_n}(s) =(( n\lambda)^n/\Gamma(n)) s^{n-1} e^{-n\lambda s}$. Hence,
\begin{align}\label{j23.5}
f_{S^1_n}(u) = \frac{( n\lambda)^n}{\Gamma(n)} u^{n-1} e^{-n\lambda u}.
\end{align}
Let $S^2_n = \frac 1 n \sum_{k=1}^n T^2_k $ and $T^3_j = T^2_j -u$ for $1\leq j \leq n$. 
It follows from the fact that $ u = \scru(\lambda,y)$ that $\E T^2_j=u$ and $\E T^3_j=0$. Let $\sigma^2 = \var T^2_j=\var T^3_j$ and $S^3_n = \frac 1 {n^{1/2}\sigma} \sum_{k=1}^n T^3_k =n^{1/2}\sigma^{-1}(S_n^2-u) $. 
We have
\begin{align*}
f_{S_n^2}(s)  = \sigma^{-1} n^{1/2} f_{S_n^3}(\sigma^{-1} n^{1/2}(s-u)).
\end{align*}
By Lemma \ref{j10.2}, 
\begin{align}\label{j23.6}
f_{S_n^2}(u) =\sigma^{-1} n^{1/2} \left( \frac 1 {\sqrt{2 \pi}} +A\right),
\end{align}
where
\begin{align}\label{y22.10}
|A| &\leq \frac {C \left(\E |T^3_j/\sigma|^3\right)^2 \max(1, C_1^3)}{\sqrt{n} },\\
C_1 &= \sup_{u\in \R} f_{T^3_j/\sigma}(u).\label{y22.11}
\end{align}
In view of \eqref{j23.3}, \eqref{j23.4}, \eqref{j23.5} and \eqref{j23.6}, 
\begin{align}\label{j23.8}
\frac{\Vol(H_*)}{\Vol(H)}
= \frac{q^n f_{S_n^2}(u)}{|D_b|^n f_{S_n^1}(u)}
= \frac{q^n\sigma^{-1} n^{1/2} \left( \frac 1 {\sqrt{2 \pi}} +A\right)}{|D_b|^n \frac{( n\lambda)^n}{\Gamma(n)} u^{n-1} e^{-n\lambda u}}.
\end{align}
The volume of  a regular $n$-simplex with unit side length is
$\sqrt{n}/((n-1)! 2^{(n-1)/2})$. The volume of  a regular $n$-simplex with the side length $\sqrt{2}$ is
$(\sqrt{2})^{n-1}\sqrt{n}/((n-1)! 2^{(n-1)/2}) =\sqrt{n}/(n-1)! $. 
Hence $\Vol(H) = |D_b|^n u^{n-1} \sqrt{n}/(n-1)!$.
This and \eqref{j23.8} imply that
\begin{align}\label{y22.12}
\Vol(H_*)
&=\Vol(H) \frac{q^n\sigma^{-1} n^{1/2} \left( \frac 1 {\sqrt{2 \pi}} +A\right)}{|D_b|^n\frac{( n\lambda)^n}{\Gamma(n)} u^{n-1} e^{-n\lambda u}}
=\frac{|D_b|^n u^{n-1} \sqrt{n}}{(n-1)! }\frac{q^n\sigma^{-1} n^{1/2} \left( \frac 1 {\sqrt{2 \pi}} +A\right)}{|D_b|^n\frac{( n\lambda)^n}{\Gamma(n)} u^{n-1} e^{-n\lambda u}}\\
&= 
\frac{ q^n\sigma^{-1} n \left( \frac 1 {\sqrt{2 \pi}} +A\right)}{( n\lambda)^n  e^{-n\lambda u}}
=\left( \frac{ q e^{\lambda u}}{\lambda}\right)^n
\left[
n^{1-n}\sigma^{-1}  \left( \frac 1 {\sqrt{2 \pi}} +A\right)\right].\notag
\end{align}

In this proof,
for any functions $a$ and $b$ of any number of variables, we will write $a\approx b$ to indicate that there exist universal constants $0< c', c'' < \infty$ such that $c' \leq a/b \leq c''$ for all values of the arguments.

By Lemma \ref{y21.1}, $q \approx |D_b|$, $\lambda u \approx 1$ and $\lambda \sigma = \lambda \sqrt{w/q - u^2}  \approx 1$, so \eqref{y22.12} implies that
\begin{align}\label{a12.1}
\Vol(H_*)\approx \left( \frac{ q e^{\lambda u}}{\lambda}\right)^{n-1}
\left[|D_b|
n^{1-n} \left( \frac 1 {\sqrt{2 \pi}} +A\right)\right].
\end{align}
Next we will  find an upper bound for $A$ defined in \eqref{j23.6} using \eqref{y22.10}-\eqref{y22.11}.

We use the bound $f_{T_j^2}(y) \leq \frac{|D_b|}{q}\lambda e^{-\lambda y}$ and the substitution $\lambda r = s$ to obtain
\begin{equation*}
\E |T^3_j/\sigma|^3
\leq \sigma^{-3}\frac{|D_b|}{q}\int_0^\infty |r-u|^3\lambda e^{-\lambda r}\rd r = \frac{1}{(\lambda \sigma)^3} \frac{|D_b|}{q} \int_0^\infty |s-u\lambda|^ 3 e^{-s}\rd s.
\end{equation*}
We have already pointed out that $\lambda \sigma  \approx 1$, $|D_b|/q\approx 1$ and $u \lambda \approx 1$. The last formula shows that $\E |T^3_j/\sigma|^3 \leq c_5< \infty$, where $c_5$ is a universal constant.

Recall that $\lambda \sigma  \approx 1$ and $|D_b|/q\approx 1$ to see that for some universal constant $c_6 < \infty$,
\begin{equation*}
   f_{T_j^3/\sigma}(s) \leq q^{-1}\sigma\lambda e^{-\lambda(s\sigma + u)}
   \leq \sigma\lambda q^{-1}\leq \frac{c_6}{|D_b|}.
\end{equation*}
This bound and $\E |T^3_j/\sigma|^3 \leq c_5$ imply that $|A| \leq c_7/\left(\sqrt{n}\min(1,|D_b|^3)\right)$ where $c_7$ is an absolute constant.

Consider the case $|D_b|= 1$. 
Then $|A| \leq c_7/\sqrt{n}$.
Let $n_1$ be so large that for $n\geq n_1$,
\begin{align*}
\frac 1 {2\sqrt{2 \pi}} <
\frac 1 {\sqrt{2 \pi}} + \frac {c_7}{\sqrt{n}}
<\frac 2 {\sqrt{2 \pi}}.
\end{align*}
This and \eqref{a12.1} prove \eqref{j19.1} in the case $|D_b| = 1$.

For  $|D_b|\ne 1$, we use scaling. If $x,y, D, R$ and $u$ are multiplied by $c_*>0$, it is elementary to verify that $\Vol_n(x,y,u)$, $q$, $\lambda$ and $u$ are rescaled by powers of $c_*$ such that \eqref{j19.1} remains true.
\end{proof}

\begin{remark}
We believe that Proposition \ref{j11.2} holds for all $n$. One could prove the bounds for $n<n_1$ using elementary estimates similar to those in the proof of Lemma \ref{y21.1}. We do not provide a proof because it is not needed for our main theorem.
\end{remark}

\section{Archimedes' principle}\label{y19.6}

In this section,
for any functions $a_1(\,\cdot \,)$ and $a_2(\,\cdot \,)$ of any arguments we will write
\begin{align}\label{a5.1}
a_1(\,\cdot \,) \approx a_2(\,\cdot \,)\qquad
\Longleftrightarrow \qquad
\exists 0< c_1, c_2<\infty: c_1 a_1(\,\cdot \,) \leq a_2(\,\cdot \,) \leq c_2 a_1(\,\cdot \,).
\end{align}
The constants $c_1$ and $c_2$ may depend only on ``fixed'' parameters in our model: $d,D,D_b,R,M,m,g$ and $E$.

\begin{proposition}\label{j13.3}
Recall the notation from  Theorem \ref{j24.1} and assume that \eqref{a8.12} holds. Recall \eqref{j28.2} and let $u_A = \scru(\lambda_A,y_A)$. For every $\eps >0$,
\begin{align*}
\lim_{n\to\infty} \P_n \left(|Y_{n+1} - y_A|> \eps
\text{  or  } \left| \sum_{i=1}^n Y_i - u_A\right| >\eps\right) =0.
\end{align*}
\end{proposition}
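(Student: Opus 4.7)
The plan is to reduce the claim to a Laplace-type concentration argument for the joint density of $(X_{n+1}, Y_{n+1}, \bar Y_n)$ where $\bar Y_n = \frac 1 n \sum_{i=1}^n Y_i$ (the displayed quantity in the statement must be read as the empirical average on dimensional grounds, since $u_A=\scru(\lambda_A,y_A)=O(1)$). Integrating the velocities out of \eqref{j10.4} shows that the marginal density of $(\bX_{n+1}, \bY_{n+1})$ is proportional to $(E - \sum_{i=1}^{n+1} m_i y_i g)^{((n+1)d-2)/2}$ on $\DnEy$; since $m_i = m/n$ for $i \leq n$, this depends on $(\bx_n, \by_n)$ only through $u=\frac 1 n\sum y_i$, so the joint density of $(X_{n+1}, Y_{n+1}, \bar Y_n)$ is proportional to $\Vol_n(x, y, u)\,(E - mgu - Mgy)^{((n+1)d-2)/2}$, with $\Vol_n$ as in Proposition \ref{j11.2}. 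That proposition gives $\Vol_n(x, y, u) \approx |D_b|\, n^{1-n}(q e^{\lambda u}/\lambda)^{n-1}$ (with $\lambda=\lambda(u,y)$, $q=q(u,y)$), so the density equals $\exp(n L(u, y) + O(\log n))$ times a normalization independent of $(x,y,u)$, where
\begin{equation*}
L(u, y) := \log\!\Bigl(\tfrac{q(u,y)\, e^{\lambda(u,y)\, u}}{\lambda(u,y)}\Bigr) + \tfrac{d}{2}\log(E - mgu - Mgy).
\end{equation*}

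The critical points of $L$ are read off from Lemma \ref{lem:der}: identity \eqref{eq:pel:der:u} gives $\prt_u \log(qe^{\lambda u}/\lambda) = \lambda$, so $\prt_u L = \lambda - \frac{dmg/2}{E - mgu - Mgy}$, and setting this to zero together with $u = \scru(\lambda, y)$ recovers \eqref{j27.10}. A short computation from \eqref{eq:p:der:y} yields $\prt_y \log(qe^{\lambda u}/\lambda) = \lambda(|D_b|/q - 1)$, hence $\prt_y L = 0$ combined with $\prt_u L = 0$ produces $|D_b|/q - 1 = M/m$, which is \eqref{a20.2}. By Theorem \ref{j24.1}(ii), the unique critical point is $(u_A, y_A)$. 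To upgrade this to a strict global maximum, observe that $u \mapsto L(u, y)$ is strictly concave (Lemma \ref{j24.2}(i) makes $\lambda(\cdot, y)$ strictly decreasing while the second summand of $\prt_u L$ is strictly increasing in $u$), so has a unique maximizer $u(y)$ satisfying \eqref{a3.2}; the envelope $\widetilde L(y) := L(u(y), y)$ then satisfies $\widetilde L'(y) = \lambda\bigl[(|D_b|/q(u(y),y) - 1) - M/m\bigr]$. The sign-of-$\rd q/\rd y$ analysis inside the proof of Theorem \ref{j24.1}(ii) forces $\widetilde L'$ to change sign only from $+$ to $-$, and exactly at $y_A$; assumption \eqref{a8.12} secures $\widetilde L'(R) > 0$, so the maximum is not at the boundary $y=R$, while $L \to -\infty$ as $mgu + Mgy \uparrow E$ (from the explicit log factor) and as $u \downarrow 0$ (since $\lambda \to \infty$ by Lemma \ref{y21.1}, whence $qe^{\lambda u}/\lambda \to 0$). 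Thus $(u_A, y_A)$ is the strict global maximum of $L$ on the admissible region.

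The conclusion is a standard Laplace estimate: lower-bound the normalizing integral by the density on a shrinking neighborhood of $(u_A, y_A)$, where the integrand is comparable to $e^{n L(u_A, y_A)}$, and upper-bound the mass outside $\{|Y_{n+1} - y_A| \leq \eps,\ |\bar Y_n - u_A| \leq \eps\}$ by $\mathrm{poly}(n)\cdot e^{n(L(u_A, y_A) - \delta(\eps))}$ with $\delta(\eps) > 0$ coming from continuity of $L$ and its strict sub-maximality; the ratio is $e^{-n\delta(\eps) + O(\log n)} \to 0$. The main obstacle is precisely the strict-global-maximum claim: the envelope reduction $L \to \widetilde L$ recycles the one-dimensional monotonicity argument from Theorem \ref{j24.1} verbatim rather than redoing a Hessian computation, while Lemma \ref{y21.1} together with the $\log(E - mgu - Mgy)$ factor pushes $L$ to $-\infty$ on the remaining boundary strata. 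A minor bookkeeping point is that Proposition \ref{j11.2} is stated only for $n \geq n_1$, which is harmless in an asymptotic statement.
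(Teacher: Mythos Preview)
The proposal is correct and is essentially the paper's own argument, repackaged in logarithmic form. Your $L(u,y)$ is $\log\alpha_y(u)$ up to an additive constant (with $\alpha_y$ as in the paper's proof), so your critical-point and sign analysis is the same computation the paper carries out for $\alpha_y$ and for $\psi(y)=\mathrm{const}\cdot\alpha_y(u_0(y))$. The one substantive difference is that you compress the quantitative step into a one-line ``standard Laplace estimate'', whereas the paper spends most of its effort making the concentration explicit: it sandwiches $\int_0^{z(y)}\alpha_y^{k}(u)(z(y)-u)^{d-1}\rd u$ between constant multiples of $z(y)^d\alpha_y^k(u_0)/\sqrt k$ (via \eqref{eq:integral:upper} and \eqref{a7.8}), and only then runs the ratio argument on $\psi^n$ that you sketch. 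Your envelope calculation $\partial_y\log(qe^{\lambda u}/\lambda)=\lambda(|D_b|/q-1)$ is in fact cleaner than the paper's direct computation of $\rd\psi/\rd y$; both routes land on the sign identity \eqref{a8.2} and then invoke the $\rd q/\rd y>0$ step from the proof of Theorem~\ref{j24.1}(ii) to conclude uniqueness of the sign change. Two minor remarks: your ``$O(\log n)$'' correction in the exponent is really $O(1)$ (the residual factor is $(z(y)-u)^{(d-2)/2}(qe^{\lambda u}/\lambda)^{-1}$, not a power of $n$), and you should check that this factor does not spoil the Laplace upper bound near the boundary of the bounded $(u,y)$-region---it does not, since the full integrand extends continuously to zero there, but the paper avoids the issue by exploiting monotonicity of $\alpha_y$ on either side of $u_0$ rather than compactness; and you are right to read the displayed sum as the empirical average $\frac1n\sum_{i=1}^n Y_i$.
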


\begin{proof}

Recall notation from \eqref{eq:DnEy:defn}.
The following formula for the marginal density $ f_{Y_{n+1}}(y)$ of $Y_{n+1}$ follows from \eqref{j10.4},
\begin{equation}\label{a8.8}
  f_{Y_{n+1}}(y) = \frac{1}{Z_n}\int_{\DnEy}\left(E-mg\frac{1}{n}\sum_{i=1}^n y_i-Mgy\right)^{((n+1)d-2)/2}\rd \bx_{n+1}\rd \by_{n}\/,
\end{equation}
where $Z_n$ is the normalizing constant.
Let $z(y)=(E-Mgy)/(mg)$ and 
note that $z(y)  >0$ for $y\in[R, E/(Mg))$ because we have assumed that $E> MgR$.
Recall notation from  Proposition \ref{j11.2}. The proposition implies that 
\begin{align}
  f_{Y_{n+1}}(y) &= \frac{1}{Z_n}\int_0^{z(y)}\left(E-Mgy-mgu\right)^{((n+1)d-2)/2} \Vol_n(y,u)\rd u \label{a8.9}\\
	&\approx \frac{1}{Z_n'}|D_b| n^{1-n}
	\int_0^{z(y)}(z(y)-u)^{((n+1)d-2)/2}
\left(\frac{q(u,y)e^{\lambda(u,y) u}}{\lambda(u,y)}\right)^{n-1}\rd u\label{eq:fY:int}
\end{align}
where 
\begin{equation*}
Z_n' = 
|D_b| n^{1-n}
\int_R^{E/(Mg)}\rd y\int_0^{z(y)}(z(y)-u)^{((n+1)d-2)/2}
  \left(\frac{q(u,y)e^{\lambda(u,y) u}}{\lambda(u,y)}\right)^{n-1}\rd u\/.
\end{equation*} 
The integral in \eqref{eq:fY:int} can be written as
\begin{equation}
  \label{eq:integral}
 \int_0^{z(y)}\alpha_y^{n-1}(u) (z(y)-u)^{d-1}\rd u\/,
\end{equation}
where $\alpha_y(u) = (z(y)-u)^{d/2}q e^{\lambda u}/\lambda$. 

Let 
\begin{align}\label{a6.4}
\beta_y(u) = \lambda(u,y)-\frac d{2 (z(y)-u)}.
\end{align}
By \eqref{eq:pel:der:u},
\begin{equation}
  \label{eq:hy:der}
 \dfrac{\prt}{\prt u}\alpha_y(u) = (z(y)-u)^{d/2}\frac{qe^{\lambda u}}{\lambda}\left(\lambda-\frac{d}{2(z(y)-u)}\right) = \alpha_y(y)\beta_y(u)\/.
\end{equation}

By Lemma \ref{j24.2}, the function $u\to \lambda(u,y)$ is strictly decreasing. Hence  $\beta_y(u) $ is strictly decreasing. 
When $u\downarrow 0$, $\lambda \to \infty$ by \eqref{y22.4}. 
Thus $\beta_y(0^+)=\infty$ for $y\in[R, E/(Mg))$.
The  estimate \eqref{y22.4} implies that  $\beta_y(z(y)^-)=-\infty$.
We conclude that there is a unique $u_0=u_0(y)\in (0,z(y))$ such that  $\beta_y(u_0)=0$, $\beta_y(y)$ is positive on $(0,u_0)$ and negative on $(u_0,z(y))$. In view of \eqref{eq:hy:der}, $\alpha_y(u)$ attains its only maximum at $u_0$ on the interval $(0,u_0)$, and the maximum is strict. Note that
\begin{equation}\label{a6.5}
   \beta_y(u) = \beta_y(u)-\beta_y(u_0) = \lambda(u,y)-\lambda(u_0,y)+\frac{d}{2}\frac{u_0-u}{(z(y)-u_0)(z(y)-u)}\/.
\end{equation}
This, and the facts that the function $u\to \lambda(u,y)$ is  decreasing 
and $z(y) \geq z(y)-u >0$ for $u\in(0,z(y))$, imply that 
\begin{align}\label{a6.2} 
   |\beta_y(u)| &= -\beta_y(u) \geq \frac{d}{2z^2(y)}(u-u_0) =  \frac{d}{2z^2(y)}|u-u_0|\/,\quad u\in( u_0,z(y))\/,\\
	 |\beta_y(u)| &= \beta_y(u) \geq  \frac{d}{2z^2(y)}(u_0-u) =  \frac{d}{2z^2(y)}|u-u_0|\/,\quad u\in(0,u_0)\/.\label{a6.3}
\end{align}
To simplify notation, we will write $k=n-1$.
From \eqref{a6.2}-\eqref{a6.3}, we obtain for $u \notin [u_0-z(y)k^{-1/2},u_0+z(y)k^{-1/2}]$,
\begin{align*} 
 \frac{2 \sqrt{k} z(y) |\beta_y(u)|}{ d} =
\frac 2 d
 \frac{ z^2(y) |\beta_y(u)|}{ z(y)k^{-1/2}} \geq 
\frac 2 d \frac{ z^2(y) |\beta_y(u)|}{ |u-u_0|} & \geq 1\/.
\end{align*}
This, \eqref{eq:integral} and \eqref{eq:hy:der}, and the bound $z(y)-u<z(y)$ imply that
\begin{align}\label{a8.10}
&\left(\int_0^{u_0-z(y)k^{-1/2}}+\int_{u_0+z(y)k^{-1/2}}^{z(y)}\right)\alpha_y^{k}(u) (z(y)-u)^{d-1}\rd u\\
&\leq
\left(\int_0^{u_0-z(y)k^{-1/2}}+\int_{u_0+z(y)k^{-1/2}}^{z(y)}\right)\alpha_y^{k}(u) (z(y)-u)^{d-1}
\frac{2 \sqrt{k} z(y) |\beta_y(u)|}{ d} \rd u\notag\\
&=
  \frac{2z^{d}(y)\sqrt{k}}{d} \left(\int_0^{u_0-z(y)k^{-1/2}}+\int_{u_0+z(y)k^{-1/2}}^{z(y)}\right)\alpha_y^{k-1}(u)\alpha_y(u)\beta_y(u)\rd u\notag\\
&  =
   \frac{2z^{d}(y)}{d\sqrt{k}}\left(\alpha^{k}_y\left(u_0-\frac{z(y)}{\sqrt{k}}\right)+\alpha^{k}_y\left(u_0+\frac{z(y)}{\sqrt{k}}\right)-\alpha_y^{k}(0)-\alpha_y^{k}(z(y))\right) \leq \frac{4}{d\sqrt{k}}z^{d}(y)\alpha_y^{k}(u_0)\/.\notag
\end{align}
We combine this estimate with the observation that $z(y) \leq E/mg$ and the following bound
\begin{equation}\label{a8.6}
   \int_{u_0-z(y)k^{-1/2}}^{u_0+z(y)k^{-1/2}}\alpha_y^{k}(u)(z(y)-u)^{d-1}\rd u\leq \frac{2z^{d}(y)}{\sqrt{k}}\alpha_y^{k}(u_0)\/,
\end{equation}
to arrive at
\begin{equation}
  \label{eq:integral:upper}
	\int_0^{z(y)}\alpha_y^{k}(u)(z(y)-u)^{d-1}\rd u \leq  \left(2+\frac{4}{d}\right)\,\frac{z^{d}(y)}{\sqrt{k}}\alpha_y^{k}(u_0)\/.
\end{equation}

Since $\beta_y(u)\leq0$ for $u\in[ u_0,z(y))$, \eqref{a6.4} implies that, for $u\in[ u_0,z(y))$,
\begin{align*}
u\geq\frac{z(y)}{1+\frac d {2u\lambda(u,y)}}.
\end{align*}
This and \eqref{y22.4} yield, 
\begin{align}\label{a7.6}
   u&\geq  \frac{z(y)}{1+100 d},\qquad u\in[ u_0,z(y)).
\end{align}
Since $\beta_y(u_0)=0$, we obtain from  \eqref{a6.4},
\begin{align*}
u_0(y)=\frac{z(y)}{1+\frac d {2u_0(y)\lambda(u_0(y),y)}},
\end{align*}
and, by \eqref{y22.4}, 
\begin{align}
   u_0(y) &\leq \frac{z(y)} {1 + d/42} \leq \frac{21}{22} z(y). \label{a7.7}
\end{align}

Using \eqref{eq:l:der:u}, \eqref{y22.4}, \eqref{y22.5} and \eqref{a7.6}, we get for $u\in[ u_0,z(y))$,
\begin{align}\label{a7.1}
 \left|\dfrac{\prt \lambda}{\prt u}\right| &= -\dfrac{\prt \lambda}{\prt u} = \frac{1}{(w/p-u^2)\lambda^2}(\lambda u)^2\frac{1}{u^2} \leq 2^{12}e\cdot 21^2 \frac{(1+100d)^2}{z^2(y)}=\frac{c_1}{z^2(y)}\/,
\end{align}
where $c_1$ depends only on $d$.

For $u\in(u_0,u_0+z(y)k^{-1/2})$ and $k\geq 44^2=1936$, in view of \eqref{a7.7},
\begin{align*}
&\frac{z^2(y)}{(z(y)-u_0)(z(y)-u)} 
\leq \frac{z^2(y)}{\left(z(y)-\frac{21}{22} z(y)\right)\left(z(y)-\frac{21}{22}z(y) - z(y) k^{-1/2}\right)} \\
&\leq \frac 1 {(1/22)(1/44)} \leq 968.
\end{align*}
This estimate, \eqref{a6.5} and \eqref{a7.1} show that
for  $u\in(u_0,u_0+z(y)k^{-1/2})$ and $k\geq 1936$,
\begin{align}\notag
   -\beta_y(u) &= \lambda(u_0,y)-\lambda(u,y)+\frac{d}{2}\frac{u-u_0}{(z(y)-u_0)(z(y)-u)}\\
	&\leq \frac{u-u_0}{z^2(y)}\left(c_1+484d\right)\leq \frac{c_2}{z(y)\sqrt{k}},\label{a7.2}
\end{align}
where $c_2>0$ depends only on $d$. 
For  $u\in(u_0,u_0+z(y)k^{-1/2})$ and $k\geq 1936$,
\begin{align*}
z(y)-u \geq z(y)-\frac{21}{22}z(y) - z(y) k^{-1/2}\geq z(y)/44.
\end{align*}
This implies that, for  $u\in(u_0,u_0+z(y)k^{-1/2})$ and $k\geq 1936$,
\begin{align}\label{a7.3}
&\int_0^{z(y)}\alpha_y^{k}(u) (z(y)-u)^{d-1}\rd u\\
&\quad\geq
  \alpha_y^{k}(u_0)
\int_{u_0}^{u_0+z(y)k^{-1/2}}(z(y)-u)^{d-1} \left(1-\frac{\alpha_y(u_0)-\alpha_u(u)}{\alpha_y(u_0)}\right)^{k}\rd u\notag\\
&\quad\geq
  z^{d-1}(y) \alpha_y^{k}(u_0)\left(1/44\right)^{d-1}\int_{u_0}^{u_0+z(y)k^{-1/2}} \left(1-\frac{\alpha_y(u_0)-\alpha_u(u)}{\alpha_y(u_0)}\right)^{k}\rd u\/.\notag
\end{align}
Recall that $u_0$ is the maximum of $\alpha_y$, and also \eqref{eq:hy:der} and \eqref{a7.2}.
 There exists $\tilde{u}\in (u_0,u_0+z(y)k^{-1/2})$ such that, 
\begin{align*}
   \frac{\alpha_y(u_0)-\alpha_u(u)}{\alpha_y(u_0)} = -\frac{\alpha_y(\tilde{u})}{\alpha_y(u_0)}\beta_y(\tilde{u})(u-u_0)\leq \frac{c_2}{z(y)\sqrt{k}}(u-u_0)\leq \frac{c_2}{k}\/.
\end{align*}
Hence, for  $u\in(u_0,u_0+z(y)k^{-1/2})$ and $k\geq 1936$,
\begin{align*}
 \left(1-\frac{\alpha_y(u_0)-\alpha_u(u)}{\alpha_y(u_0)}\right)^{k}
 \geq c_3.
\end{align*}
We combine this with \eqref{a7.3} to see that
\begin{equation}\label{a7.8}
  \int_0^{z(y)}\alpha_{y}^k(y)(z(y)-u)^{d-1}\rd u\geq \frac{c_4}{\sqrt{k}}z^d(y)\alpha_y^k(u_0)\/.
\end{equation}

Let $\lambda_0=\lambda(u_0(y),y)$, $q_0=q(u_0(y),y)$ and
\begin{align}\label{a7.9}
   \psi(y) &= (2/d)^{d/2}\alpha_y(u_0)=(2/d)^{d/2}(z(y)-u_0(y))^{d/2} \frac{q_0e^{\lambda_0u_0}}{\lambda_0} \/.
\end{align}
Since $u_0(y)$ makes the right hand side of \eqref{a6.4} equal to 0,
\begin{align}\label{a7.11}
   \psi(y) &= (2/d)^{d/2} (z(y)-u_0(y))^{d/2} \frac{q_0e^{\lambda_0u_0}}{\lambda_0} = \frac{q_0e^{\lambda_0u_0}}{\lambda_0^{d/2+1}}\/.
\end{align}

Recall that $k=n-1$.
It follows from \eqref{eq:fY:int}, \eqref{eq:integral}, \eqref{eq:integral:upper}, \eqref{a7.8} and \eqref{a7.9}  that
\begin{equation}\label{a8.3}
   f_{Y_{n+1}}(y) \approx \frac{1}{Z_n''}\,\psi^{n-1}(y)z^{d}(y)\/,
\end{equation}
with the normalizing constant $Z_n''= \int_R^{E/(Mg)}\psi^{n-1}(y)z^{d}(y)\rd y$. 

Comparing \eqref{a3.2} and \eqref{a6.4}, we see that we can apply  \eqref{a7.10} to $u_0(y)$, i.e., 
\begin{equation}\label{a8.5}
   \dfrac{\rd  u_0}{\rd  y}\left(1-\frac{d}{2\lambda^2_0}\dfrac{\prt \lambda}{\prt u}\right) = -\frac{M}{m}+\frac{d}{2\lambda^2_0}\dfrac{\prt \lambda}{\prt y}\/.
\end{equation}
We use this formula, \eqref{eq:p:der:u}, \eqref{eq:p:der:y} and \eqref{a7.11} in the following calculation,
\begin{align*}
 \lambda_0^{d/2+2}&e^{-\lambda_0 u_0}\dfrac{\rd \psi}{\rd y}  =  \lambda_0\left(\dfrac{\prt q}{\prt u}\dfrac{\rd u_0}{\rd y}+\dfrac{\prt q}{\prt y}\right) +\lambda_0^2 q_0 \dfrac{\rd u_0}{\rd y}+q_0\left(\lambda_0u_0-\frac{d+2}{2}\right)\left(\dfrac{\prt \lambda}{\prt u}\dfrac{\rd u_0}{\rd y}+\dfrac{\prt \lambda}{\prt y}\right)\\
	& = \dfrac{\rd u_0}{\rd y}\left(\lambda_0\dfrac{\prt q}{\prt u}+q_0\left(\lambda_0u_0-\frac{d+2}{2}\right)\dfrac{\prt \lambda}{\prt u}+\lambda_0^2q_0\right)+\lambda_0\dfrac{\prt q}{\prt y}+ q_0\left(\lambda_0u_0-\frac{d+2}{2}\right)\dfrac{\prt \lambda}{\prt y}\\
	& = \dfrac{\rd u_0}{\rd y}\left(\lambda_0 \frac{q_0}{\lambda_0}(1-\lambda_0 u_0)\dfrac{\prt \lambda}{\prt u}
+q_0\left(\lambda_0u_0-\frac{d+2}{2}\right)\dfrac{\prt \lambda}{\prt u}+\lambda_0^2q_0\right)\\
&\qquad+\lambda_0\left( \frac{q_0}{\lambda_0}(1-\lambda_0 u_0)\dfrac{\prt \lambda}{\prt y}+\lambda_0 (|D_b|-q_0) \right)
+ q_0\left(\lambda_0u_0-\frac{d+2}{2}\right)\dfrac{\prt \lambda}{\prt y}\\
	& = \lambda_0^2q_0\dfrac{\rd u_0}{\rd y}\left(1-\frac{d}{2\lambda_0^2}\dfrac{\prt \lambda}{\prt u}\right) +\lambda_0^2(|D_b|-q_0)-\frac{dq_0}{2}\dfrac{\prt \lambda}{\prt y}\\
	& =\lambda_0^2q_0\left(-\frac{M}{m}+\frac{d}{2\lambda^2_0}\dfrac{\prt \lambda}{\prt y}\right)+\lambda_0^2(|D_b|-q_0)-\frac{dq_0}{2}\dfrac{\prt \lambda}{\prt y}\\
	& = \frac{\lambda_0^2 q_0}{m}\left(m\frac{|D_b|-q_0}{q_0}-M\right)\/.
\end{align*}
Recall the usual $\sign$ function that takes values $-1,0$ or $1$.
The formula given above implies that 
\begin{align}\label{a8.2}
\sign\left(\dfrac{\rd \psi(y)}{\rd y}\right)
= \sign\left(m\frac{|D_b|-q_0(y)}{q_0(y)}-M\right)
= \sign\left(m\frac
 {\int_{\cB((\wx, y), R)} \lambda_0 e^{-\lambda_0 r} \rd x \rd r}
 {\int_{D\setminus\cB((\wx, y), R)} \lambda_0 e^{-\lambda_0 r} \rd x \rd r } -M\right).
\end{align}
According to \eqref{a6.4} and the definition of $u_0(y)$,
\begin{align}\label{a8.1}
 \lambda(u_0(y),y)-\frac d{2 (z(y)-u_0(y))} =0.
\end{align}
This is equivalent to \eqref{j27.10}
(see also \eqref{j28.4} and \eqref{a3.2}).
In view of \eqref{a20.2}-\eqref{a8.12}, \eqref{a8.2} and \eqref{a8.1},  by Theorem \ref{j24.1} (ii), there exists a unique $y_A\in( R, E/(Mg))$ such that $\left.\dfrac{\rd \psi(y)}{\rd y}\right|_{y=y_A} =0$ and \eqref{a8.1} holds.
Hence
$\psi(y)$ attains its maximum at $y_A$ and it is strictly increasing on $(R,y_A)$ and strictly decreasing on $(y_A,E/(Mg))$. 

For later reference we note that the above argument also shows that $\lambda_0=\lambda(u_0(y_A),y_A) = \lambda_A$, with $\lambda_A$ as defined in Theorem \ref{j24.1}, and, therefore, $u_0(y_A) = u_A$, with $u_A$ as defined
in the statement of the present theorem.

According to \eqref{a8.3}, for some $c_5$ and any  $\varepsilon\in(0, \min(y_A-R,E/(Mg)-y_A))$, 
\begin{align*}
  \P_n(|Y_{n+1}-y_A|\geq \varepsilon) &\leq \frac{c_5}{Z_n''}\left(\int_R^{y_A-\varepsilon}+\int_{y_A+\varepsilon}^{E/(Mg)}\right)\psi^n(y)z^{d/2}(y)\rd y\/.
\end{align*}
Since
\begin{align*}
 Z_n''&\geq \int_{y_A}^{y_A+\varepsilon/2}\psi^n(y)z^{d/2}(y)\rd y
+\int_{y_A-\varepsilon/2}^{y_A}\psi^n(y)z^{d/2}(y)\rd y\\
 &\geq \frac{\varepsilon}{2}\psi^n(y_A+\varepsilon/2)z^{d/2}(y_A+\varepsilon/2)
 + \frac{\varepsilon}{2}\psi^n(y_A-\varepsilon/2)z^{d/2}(y_A)\/,
\end{align*}
and
\begin{align*}
& \left(\int_R^{y_A-\varepsilon}+\int_{y_A+\varepsilon}^{E/(mg)}\right)\psi^n(y)z^{d/2}(y)\rd y
\leq (E/(Mg) - R) z^{d/2}(R) (\psi^n(y_A-\varepsilon) + \psi^n(y_A+\varepsilon)),
\end{align*}
we have 
\begin{align*}
  \P_n(|Y_{n+1}-y_A|\geq \varepsilon)
&\leq
\frac
{(E/(Mg) - R) z^{d/2}(R) (\psi^n(y_A-\varepsilon) + \psi^n(y_A+\varepsilon))}
{\frac{\varepsilon}{2}\psi^n(y_A+\varepsilon/2)z^{d/2}(y_A+\varepsilon/2)
 + \frac{\varepsilon}{2}\psi^n(y_A-\varepsilon/2)z^{d/2}(y_A)}  \\
   &\leq \frac{2E}{\varepsilon Mg}\left(\frac{z(R)}{z(y_A+\varepsilon/2)}\right)^{d/2}\left[\left(\frac{\psi(y_A+\varepsilon)}{\psi(y_A+\varepsilon/2)}\right)^n +\left(\frac{\psi(y_A-\varepsilon)}{\psi(y_A-\varepsilon/2)}\right)^n\right].
\end{align*}
The right-hand side  goes to $0$ as $n\to \infty$, i.e.,
\begin{align}\label{a8.7}
\lim_{n\to\infty} \P_n(|Y_{n+1}-y_A|\geq \varepsilon)=0.
\end{align}

A calculation analogous to \eqref{a8.10} gives for $\delta >0$,
\begin{align}\label{a8.11}
&\left(\int_0^{u_0-\delta}+\int_{u_0+\delta}^{z(y)}\right)\alpha_y^{k}(u) (z(y)-u)^{d-1}\rd u\\
& \quad \leq\frac{2z^{d}(y)}{d\sqrt{k}}\left(\alpha^{k}_y\left(u_0-\delta\right)+\alpha^{k}_y\left(u_0+\delta\right)-\alpha_y^{k}(0)-\alpha_y^{k}(z(y))\right) \notag\\
&\quad\leq \frac{2z^{d}(y)}{d\sqrt{k}}\left(\alpha^{k}_y\left(u_0-\delta\right)+\alpha^{k}_y\left(u_0+\delta\right)\right).\notag
\end{align}
Recall that $k=n-1$ and, by \eqref{a8.8} and \eqref{a8.9}, that $\frac 1 n \sum_{i=1}^n y_i = u$. The following remark made above about $\psi$ applies also to $\alpha_y$ because of \eqref{a7.9}: ``$\psi(y)$ attains its maximum at $y_A$ and it is strictly increasing on $(R,y_A)$ and strictly decreasing on $(y_A,E/(Mg))$.''
Combining \eqref{a7.8} and \eqref{a8.11}, we obtain for every fixed $y$,
\begin{align}\label{a8.20}
\limsup_{n\to\infty} \P_n& \left(\left|\frac 1 n \sum_{i=1}^n Y_i - u_0(y)\right| >\delta \mid Y_{n+1} = y\right) \\
&\leq\limsup_{k\to\infty}
\frac
{\frac{2z^{d}(y)}{d\sqrt{k}}\left(\alpha^{k}_y\left(u_0-\delta\right)+\alpha^{k}_y\left(u_0+\delta\right)\right)}
{\frac{c_4}{\sqrt{k}}z^d(y)\alpha_y^k(u_0)}
=0.\notag
\end{align}
Using the fact that $u_0(y)$ is a continuous function of $y$ (see, e.g., \eqref{a8.5}),  \eqref{a8.7}, \eqref{a8.20} and applying the dominated convergence theorem  to the indicator function of the event $\left\{ \left|\frac 1 n \sum_{i=1}^n Y_i - u_0(y)\right| >\eps \right\}$ we obtain from \eqref{a8.7} and \eqref{a8.20},
\begin{align*}
\lim_{n\to\infty} \P_n \left(|Y_{n+1} - y_A|> \eps
\text{  or  } \left| \sum_{i=1}^n Y_i - u_0(y_A)\right| >\eps\right) =0.
\end{align*}
It remains to recall that we have shown that $u_0(y_A) = u_A$.
\end{proof}

Recall  definitions \eqref{j24.4} of $( \bX_n,\bY_n)$, \eqref{j24.3} of $\nu_{x,y,\lambda}$, and \eqref{j13.2} of $\bD_n(x,y,u)$.

\begin{lemma}\label{j15.1}
Fix any $\wh x\in D_b'$, $u>0$ and $\wh y\geq R$, and
let $\wh \cB= \cB(( \wh x, \wh y), R)$.
Let $\P_n^{\wh x,\wh y,u}$ denote the uniform distribution on $\bD_n(\wh x,\wh y,u)$. Suppose that $( \bX_n,\bY_n)$ has the distribution $\P_n^{\wh x,\wh y,u}$.

(i) 
When $n\to \infty$ then for every fixed $j\geq 1$, the distribution of $(X_j,Y_j)$ converges to $\nu_{\wh x,\wh y,\lambda}$, where $\lambda$ is the solution to $ u = \scru(\lambda,y)$. Moreover, for any $j_1\ne j_2$, the joint distribution of $((X_{j_1},Y_{j_1}),(X_{j_2}, Y_{j_2}))$ converges to $\nu_{\wh x,\wh y,\lambda} \times \nu_{\wh x,\wh y,\lambda}$.

(ii)
The convergence in (i) is uniform in the sense that for any bounded $d$-dimensional rectangular parallelepipeds $A_1,A_2\subset \R^d$ with non-empty interior, any  $0< u_1 < u_2 < \infty$, and any $\eps>0$, there exists $n_1$ such that for all $n\geq n_1$, $\wh x \in D_b'$, $y>R$ and $u\in[u_1,u_2]$,
\begin{align}\label{j13.6}
\left| \P_n^{\wh x,y,u} (((X_{j_1},Y_{j_1}),(X_{j_2}, Y_{j_2})) \in A_1 \times A_2) - \nu_{\wh x,y,\lambda} \times \nu_{\wh x,y,\lambda}
(A_1 \times A_2) \right| < \eps.
\end{align}

\end{lemma}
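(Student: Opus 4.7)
The plan is to recognize $\P_n^{\wh x,\wh y,u}$ as a conditional distribution of an i.i.d.\ sample and then apply the local central limit theorem of Lemma \ref{j10.2}. Let $(Z_i, T_i)_{i=1}^n$ be i.i.d.\ with common law $\nu_{\wh x,\wh y,\lambda}$, where $\lambda = \lambda(u,\wh y)$ is the unique solution to $u = \scru(\lambda,\wh y)$ provided by Lemma \ref{j24.2}. Their joint density on $(D_b \times \R_+)^n$ equals
\[
\scrq(\lambda,\wh y)^{-n}\,\lambda^n\,\mathbf{1}_{(D \setminus \wh \cB)^n}(\mathbf z,\mathbf t)\,\exp\Bigl(-\lambda \textstyle\sum_{i=1}^n t_i\Bigr),
\]
so on the hyperplane $\{n^{-1}\sum t_i = u\}$ the exponential factor is the constant $e^{-\lambda n u}$ and the conditional density is uniform on $\bD_n(\wh x,\wh y,u)$. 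This identifies $\P_n^{\wh x,\wh y,u}$ with the conditional law of $((Z_1,T_1),\dots,(Z_n,T_n))$ given $\bar T := n^{-1}\sum T_i = u$. Crucially, the defining equation forces $\E T_i = u$, so we are conditioning the sample mean to equal its population value.

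With this identification, Bayes' formula gives
\[
\P_n^{\wh x,\wh y,u}\bigl((X_j,Y_j)\in A_1\bigr) = \int_{A_1}\nu_{\wh x,\wh y,\lambda}(dz,dt)\,\frac{f_{\bar T \mid T_j = t}(u)}{f_{\bar T}(u)}.
\]
Writing $\wt S_m := m^{-1}\sum_{i=1}^m T_i$, we have $f_{\bar T}(u) = f_{\wt S_n}(u)$ and $f_{\bar T \mid T_j = t}(u) = \tfrac{n}{n-1}\,f_{\wt S_{n-1}}\bigl(u + (u - t)/(n - 1)\bigr)$. I would then apply Lemma \ref{j10.2} to the standardized summands $(T_i - u)/\sigma$ with $\sigma^2 = \scrw/\scrq - u^2$ in order to obtain $f_{\wt S_m}(u + h) = \sqrt m\,(\sigma\sqrt{2\pi})^{-1}\bigl(1 + O(1/\sqrt m)\bigr)$ uniformly for $h$ in any bounded set. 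Substituting shows that the density ratio tends to $1$ uniformly for $t$ in bounded sets, yielding $\P_n^{\wh x,\wh y,u}((X_j,Y_j)\in A_1)\to\nu_{\wh x,\wh y,\lambda}(A_1)$. The joint statement for two indices follows from the same computation with $n - 2$ in place of $n - 1$ and with $f_{\bar T \mid T_{j_1} = t_1, T_{j_2} = t_2}(u) = \tfrac{n}{n-2}\,f_{\wt S_{n-2}}\bigl(u + (2u - t_1 - t_2)/(n-2)\bigr)$; bounded $t_1, t_2$ make the ratio converge to $1$ uniformly, so the joint probability converges to $\nu_{\wh x,\wh y,\lambda}(A_1)\,\nu_{\wh x,\wh y,\lambda}(A_2)$.

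To obtain the uniform statement (ii), the main task is to verify that all constants appearing in the error bound of Lemma \ref{j10.2} are controlled uniformly in the admissible parameters. For $u \in [u_1,u_2]$, Lemma \ref{y21.1} implies that $\lambda u$ lies in a compact subinterval of $(0,\infty)$ independent of $\wh x$ and $\wh y \geq R$, so $\lambda$ itself stays in a compact subset of $(0,\infty)$; in the same way, $\sigma \lambda$ and $\scrq/|D_b|$ are comparable to absolute constants. These comparabilities bound $C_1 = \sup_s f_{(T_j - u)/\sigma}(s) \leq \sigma\lambda\,|D_b|/\scrq$ and $M_3 = \E|(T_j - u)/\sigma|^3$ uniformly, and the boundedness of $A_1, A_2$ confines $t$ to a bounded set; hence the local CLT error in $f_{\wt S_m}(u + h)$ is $O(1/\sqrt m)$ with a constant independent of $\wh x, \wh y, u$, from which \eqref{j13.6} follows. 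The principal obstacle throughout is precisely this uniformity in the unbounded range $\wh y \in [R,\infty)$, and it is resolved exactly because the estimates of Lemma \ref{y21.1} were proved uniformly in $\wh y \geq R$.
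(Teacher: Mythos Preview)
Your approach is essentially identical to the paper's: recognize $\P_n^{\wh x,\wh y,u}$ as the law of an i.i.d.\ sample from $\nu_{\wh x,\wh y,\lambda}$ conditioned on the sample mean of the heights equaling $u$, then compare the conditional one- (or two-) particle density to the unconditional one via the local CLT of Lemma~\ref{j10.2}, and finally invoke Lemma~\ref{y21.1} for the uniformity in part (ii). The paper does exactly this, with the cosmetic difference that it absorbs your denominator $f_{\bar T}(u)=f_{\wt S_n}(u)$ into a normalizing constant $c_1$ rather than evaluating it separately.

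One slip to correct: the displayed asymptotic $f_{\wt S_m}(u+h)=\sqrt{m}\,(\sigma\sqrt{2\pi})^{-1}(1+O(1/\sqrt m))$ ``uniformly for $h$ in any bounded set'' is false as written---for any fixed $h\neq 0$ the left side decays, since after standardization the argument $\sqrt{m}\,h/\sigma\to\infty$. What you actually need (and what the paper uses) is that the specific shifts arising here satisfy $h=(u-t)/(n-1)=O(1/n)$ for bounded $t$, so the standardized argument tends to $0$ and $\varphi$ is evaluated near $0$; then the asymptotic holds and the ratio goes to $1$. The paper also records a crude uniform bound on the ratio for unbounded $t$ (its inequality \eqref{j11.7}) to justify the passage to the limit via dominated convergence; since you only test against bounded rectangles $A_1,A_2$ this is not strictly needed in your formulation, but it is worth noting.
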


\begin{proof}
We will reuse some ideas from the proof of Proposition \ref{j11.2}.

Let $  (Z^{1 }_1,T^{1 }_1),\ldots,(Z^{1 }_{n},T^{1 }_n)$ be i.i.d., with $Z^{1 }_i \in \R^{d-1}$ and $T^{1 }_i >0$ for $1\leq i \leq n$. Assume that $T^{1 }_i$ and $Z^{1 }_i$ are independent, $Z^{1 }_i$ has the uniform distribution in $D_b$, and $T^{1 }_i$  has the exponential distribution with parameter $\lambda$.

Let $f_1(z_1,t_1, \dots,z_n, t_n)$ be the density of $\left((Z^{1 }_1,T^{1 }_1),\ldots,(Z^{1 }_{n},T^{1 }_n)\right)$.
We have 
\begin{align}\label{j11.3}
f_1(z_1,t_1, \dots,z_n, t_n)
= |D_b|^{-n} \lambda ^n \exp(-\lambda(t_1+ \dots + t_n)),
\end{align}
for $z_i \in D_b$, $t_i >0$, $1\leq i \leq n$. 

Let $(Z^{2 }_1,T^{2 }_1),\ldots,(Z^{2 }_{n},T^{2 }_n)$ be i.i.d., with $( Z^{2 }_i,  T^{2 }_i)$ being distributed as $( Z^{1 }_i,  T^{1 }_i)$ conditioned by 
$\{(Z^{1 }_i,T^{1 }_i)\notin \wh \cB\}$.
Hence, $(Z^{2 }_i,T^{2 }_i)$ has the distribution $\nu_{\wh x, \wh y, \lambda}$.

 By the definition of $q$ and \eqref{j11.3}, the density $f_2$ of $((Z^{2 }_1,T^{2 }_1),\ldots,(Z^{2 }_{n},T^{2 }_n))$ is given by
\begin{align}\label{j11.4}
f_2(z_1,t_1, \dots,z_n, t_n)
= \left(\frac q {|D_b|}\right)^{-n} |D_b|^{-n} \lambda^n \exp(-\lambda(t_1+ \dots + t_n))
\end{align}
on the set $(D\setminus \wh \cB)^n$.

Let $S^2_n = \frac 1 n \sum_{k=1}^n T^2_k $. 
Let $((Z^{3 }_1,T^{3 }_1),\ldots,(Z^{3 }_{n},T^{3 }_n))$ be
the sequence  $((Z^{2 }_1,T^{2 }_1),\ldots,(Z^{2 }_{n},T^{2 }_n))$ 
conditioned by $\{S^2_n = u\}$. Note that the distribution of $((Z^{3 }_1,T^{3 }_1),\ldots,(Z^{3 }_{n},T^{3 }_n))$ is the uniform distribution on $\bD_n(\wh x,\wh y,u)$. Hence, the distribution of $(X_j,Y_j)$ under $\P_n^{\wh x,\wh y,u}$ is the same as the distribution of $(Z^{3 }_{j},T^{3 }_j)$. 
We will show that the distribution of $(Z^{3 }_{j},T^{3 }_j)$ converges to that of $(Z^{2 }_{j},T^{2 }_j)$ as $n\to \infty$. This is equivalent to the weak convergence of $\P_n^{\wh x,\wh y,u}$ to $\nu_{\wh x, \wh y, \lambda}$ since the distribution of $(Z^{2 }_i,T^{2 }_i)$ has the distribution $\nu_{\wh x, \wh y, \lambda}$.

Fix $j\geq 1$ and consider $n> j$. Let
$S^{2,j}_n = \frac 1 {n-1} \sum_{k=1, \dots, n; k\ne j} T^2_k $.

Since  $(Z^{2 }_1,T^{2 }_1),\ldots,(Z^{2 }_{n},T^{2 }_n)$ is an i.i.d. sequence, the density $f_{(Z^{3 }_{j},T^{3 }_j)}$ has the form
\begin{align}\label{j11.5}
f_{(Z^{3 }_{j},T^{3 }_j)}(z,t)
= c_1 f_{(Z^{2 }_{j},T^{2 }_j)}(z,t)
f_{S^{2,j}_n} \left(u+ \frac 1 {n-1} (u-t)\right),
\end{align}
where $c_1$ is the normalizing constant.

Let $\sigma^2 = \var T^2_i$ and $T^4_i = (T^2_i - u)/(\sigma\sqrt{n-1})$. Note that, since $ u = \scru(\lambda,y)$, $\E T^4_i=0$. Let
\begin{align*}
S^{4,j}_n = \sum_{k=1, \dots, n; k\ne j} T^4_k
= \frac 1 {\sigma\sqrt{n-1}} \sum_{k=1, \dots, n; k\ne j} (T^2_k - u)
= \frac{\sqrt{ n-1}}{\sigma} (S^{2,j}_n-u).
\end{align*}
Hence 
\begin{align*}
f_{S^{2,j}_n} (u+s) = \frac{\sqrt{ n-1}}{\sigma}  f_{S^{4,j}_n}\left(\frac{\sqrt{ n-1}}{\sigma}  s\right).
\end{align*}
By Lemma \ref{j10.2},
\begin{align}\label{j24.5}
 f_{S^{2,j}_n}&\left(u+ \frac 1 {n-1} (u-t)\right)
 = \frac{\sqrt{ n-1}}{\sigma}  f_{S^{4,j}_n}\left(\frac{\sqrt{ n-1}}{\sigma}  \frac 1 {n-1} (u-t)\right)\\
 &= \frac{\sqrt{ n-1}}{\sigma}  f_{S^{4,j}_n}
 \left(\frac 1 {\sigma\sqrt{n-1}}  (u-t)\right)
  =\frac{\sqrt{ n-1}}{\sigma}
 \left(\phi\left( \frac 1 {\sigma\sqrt{n-1}} (u-t)\right) + A\right) .\notag
\end{align}
Since the random variable $T^2_i $ has the same distribution as the random variable with the same name in the proof of Proposition \ref{j11.2}, the estimates 
\eqref{y22.10}-\eqref{y22.11} for $A$, and those at the end of the proof of 
Proposition \ref{j11.2} apply in the present case. Thus 
\begin{align}\label{j26.1}
|A| \leq \frac {c_2}{\sqrt{n-1}},
\end{align}
where $c_2$ depends only on $|D_b|$.

Fix any $u_1 >0$.
Then, by \eqref{y22.6} and \eqref{y22.2}, there exists $c_3>0$ such that for all $u \geq u_1$, 
\begin{align}\label{j26.2}
\sigma^2 \geq c_3.
\end{align}

Fix any $u_2\in(u_1,\infty)$. It follows from \eqref{j24.5}, \eqref{j26.1} and \eqref{j26.2} that for any fixed $0 \leq t_1 < t_2 < \infty$, $u\in[u_1,u_2]$ and $\eps>0$ there exists $n_1$ such that for $n\geq n_1$, $t_3,t_4\in[t_1,t_2]$, and $t_5\notin[t_1,t_2]$,
\begin{align}\label{j11.6}
1-\eps <\ 
&\frac{
f_{S^{2,j}_n}\left(u+ \frac 1 {n-1} (u-t_3)\right)}
{f_{S^{2,j}_n}\left(u\right)}
<1+\eps,\\
&\frac{
f_{S^{2,j}_n}\left(u+ \frac 1 {n-1} (u-t_5)\right)}
{f_{S^{2,j}_n}(u)}
<2.\label{j11.7}
\end{align}
This, \eqref{y22.8} and \eqref{j11.5} imply that the distribution of $(Z^{3 }_{j},T^{3 }_j)$ converges to that of $(Z^{2 }_{j},T^{2 }_j)$ as $n\to \infty$. 
Moreover,
for any bounded $d$-dimensional rectangular parallelepiped $A_1\subset \R^d$ with non-empty interior, any $0< u_1 < u_2 < \infty$, and any $\eps>0$, there exists $n_1$ such that for all $n\geq n_1$, $\wh x \in D_b'$, $y>R$ and $u\in[u_1,u_2]$,
\begin{align*}
\left| \P_n^{\wh x,y,u} ((X_{j_1},Y_{j_1}) \in A_1) - \nu_{\wh x,y,\lambda} 
(A_1 ) \right| < \eps.
\end{align*}

A completely analogous argument shows that for any $j_1\ne j_2$, 
the distribution of $\left((Z^{3 }_{j_1},T^{3 }_{j_1}),(Z^{3 }_{j_2},T^{3 }_{j_2})\right)$ converges to that of $\left((Z^{2 }_{j_1},T^{2 }_{j_1}),(Z^{2 }_{j_2},T^{2 }_{j_2})\right)$ as $n\to \infty$, and also part (ii) of the lemma holds true. 
\end{proof}

Recall 
 $\nu_{\wx,y,\lambda}$  defined in \eqref{j24.3},   the empirical measure $\Q_n$ defined in \eqref{j13.1},
and  $\bD_n(x,y,u)$ defined in \eqref{j13.2}.

\begin{lemma}\label{j14.1}
The marginal distribution of $X_{n+1}$ under $\P_n$ is uniform in $D_b'$. Given $\{X_{n+1}=x\}$, the conditional distribution of  $\Q_n$ converges to $\nu_{x,\ya,\lambda_A}$ weakly, in probability as $n\to \infty$.
\end{lemma}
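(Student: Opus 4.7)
The plan is to prove the two claims separately, using the explicit form of the microcanonical density \eqref{j10.4} together with Proposition \ref{j13.3} and Lemma \ref{j15.1}.

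For the uniformity of $X_{n+1}$, I would first integrate the velocities out of \eqref{j10.4}, so that the joint position density on $\DnEy$ is proportional to $\left(E-\sum_{i=1}^{n+1}m_iy_ig\right)^{((n+1)d-2)/2}$, which has no dependence on any horizontal coordinate. For fixed $Y_{n+1}=y_{n+1}$ and $\bY_n=\by_n$, the constraints on $\bX_n$ decouple across indices, and the admissible set for each $X_k$ is $D_b$ minus the horizontal cross-section of $\cB((x_{n+1},y_{n+1}),R)$ at height $y_k$. Because $x_{n+1}\in D_b'$ guarantees that this cross-section lies entirely inside $D_b$, its $(d-1)$-volume depends only on $y_k$ and $y_{n+1}$, not on $x_{n+1}$. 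Integrating $\bX_n$ out therefore produces a factor independent of $x_{n+1}$, and the marginal density of $X_{n+1}$ is constant on $D_b'$. A useful by-product is that the conditional law of $(\bY_n,Y_{n+1})$ given $X_{n+1}=x$ does not depend on $x$, so Proposition \ref{j13.3} holds verbatim under $\P_n(\cdot\mid X_{n+1}=x)$ for every $x\in D_b'$.

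For the empirical-measure convergence, fix $x\in D_b'$ and a bounded Lipschitz $f$ on $D$ with $\|f\|_\infty\leq 1$. The density above is constant on the slice $\{X_{n+1}=x,\,Y_{n+1}=y,\,\bar Y_n=u\}$, where $\bar Y_n:=n^{-1}\sum_{i=1}^n Y_i$, because the energy factor depends on $(\bY_n,Y_{n+1})$ only through $(\bar Y_n,Y_{n+1})$. Hence the conditional law of $(\bX_n,\bY_n)$ on this slice is the uniform measure $\P_n^{x,y,u}$ on $\bD_n(x,y,u)$ from Lemma \ref{j15.1}. Setting $\mathcal G_n^\eps:=\{|Y_{n+1}-\ya|\leq\eps,\,|\bar Y_n-u_A|\leq\eps\}$, Proposition \ref{j13.3} gives $\P_n(\mathcal G_n^\eps\mid X_{n+1}=x)\to 1$. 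Since $u_A=\scru(\lambda_A,\ya)$, uniqueness in Lemma \ref{j24.2} forces $\lambda(u_A,\ya)=\lambda_A$, and the derivative formulas in Lemma \ref{lem:der} show that $(u,y)\mapsto\lambda(u,y)$ is continuous, so $\nu_{x,y,\lambda(u,y)}$ is weakly continuous at $(\ya,u_A)$. Combining this with Lemma \ref{j15.1}(ii) uniformly in $(y,u)\in\mathcal G_n^\eps$ yields
\begin{equation*}
\E\bigl[f(X_j,Y_j)\mid X_{n+1}=x,\,Y_{n+1}=y,\,\bar Y_n=u\bigr]\longrightarrow\nu_{x,\ya,\lambda_A}(f),
\end{equation*}
and averaging over $j=1,\dots,n$ gives $\E[\Q_n(f)\mid X_{n+1}=x,\mathcal G_n^\eps]\to\nu_{x,\ya,\lambda_A}(f)$.

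To promote mean convergence to convergence in probability, I would apply the same reasoning to pairs. For $j_1\ne j_2$, Lemma \ref{j15.1} says the joint law of $((X_{j_1},Y_{j_1}),(X_{j_2},Y_{j_2}))$ under $\P_n^{x,y,u}$ converges to $\nu_{x,\ya,\lambda_A}\times\nu_{x,\ya,\lambda_A}$, uniformly in $(y,u)\in\mathcal G_n^\eps$, so the pairwise covariance of $f(X_{j_1},Y_{j_1})$ and $f(X_{j_2},Y_{j_2})$ tends to $0$. The standard exchangeability estimate
\begin{equation*}
\var(\Q_n(f))\leq\frac{\|f\|_\infty^2}{n}+\frac{n-1}{n}\,\mathrm{Cov}(f(X_1,Y_1),\,f(X_2,Y_2))
\end{equation*}
then gives $\var(\Q_n(f)\mid X_{n+1}=x,\mathcal G_n^\eps)\to 0$, and Chebyshev's inequality delivers convergence in probability of $\Q_n(f)$ to $\nu_{x,\ya,\lambda_A}(f)$ conditionally on $X_{n+1}=x$; applying this to a countable convergence-determining class of test functions yields weak convergence in probability. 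The main obstacle is managing the two layers of randomness simultaneously -- the random $(Y_{n+1},\bar Y_n)$ and, on top of it, the conditional sample $\P_n^{x,y,u}$ -- by exploiting the uniformity of Lemma \ref{j15.1}(ii) over compact parameter sets so that one can safely send $n\to\infty$ first and $\eps\to 0$ afterwards.
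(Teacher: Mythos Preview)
Your proposal is correct and follows essentially the same route as the paper: disintegrate $\P_n$ through the uniform measures $\P_n^{x,y,u}$, localize $(Y_{n+1},\bar Y_n)$ near $(\ya,u_A)$ via Proposition \ref{j13.3}, and use Lemma \ref{j15.1} together with exchangeability and a second-moment computation to force $\Q_n(f)\to\nu_{x,\ya,\lambda_A}(f)$ in probability. The paper packages the second-moment step slightly more compactly, expanding $\E_n^x(\Q_n(A)-\nu_x'(A))^2$ directly for indicators $A$ of rectangular parallelepipeds (matching the statement of Lemma \ref{j15.1}(ii)) rather than splitting into mean convergence plus vanishing variance for Lipschitz $f$, but the content is the same.
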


\begin{proof}
It follows easily from the microcanonical ensemble formula \eqref{j10.4} that the marginal distribution of $X_{n+1}$ under $\P_n$ is uniform in $D_b'$.

The same formula \eqref{j10.4} implies that
one can represent $\P_n$ as follows. Let $\P_n^{x,y,u}$ denote the uniform distribution on $\bD_n(x,y,u)$. Then there exists  a probability measure $\mu_n$  on $D_b'\times [R,\infty) \times \R_+$ such that  $\P_n = \int \P_n^{x,y,u} \rd \mu_n(x,y,u)$. 
Let $\P_n^x = \int \P_n^{x,y,u} \mu_n(x,\rd y,\rd u)$.
Let $\E_n^x$ and $\E_n^{x,y,u} $ denote expectations corresponding to $\P_n^x$ and $\P_n^{x,y,u}$.

Fix any $x\in D_b'$ and write $\nu_x'= \nu_{x,y_A,\lambda_A}$.

Fix any bounded $d$-dimensional rectangular parallelepiped $A\subset \R^d$ with non-empty interior. 
We have
\begin{align}\label{j13.4}
&\E_n^x \left( \Q_n(A) - \nu_x'(A)\right)^2
= \E_n^x \left( \frac 1 n \sum_{i=1}^n \bone_{A} (X_i,Y_i) - \nu_x'(A)\right)^2\\
&= \E_n^x \left( \frac 1 n \sum_{i=1}^n 
\left(\bone_{A} (X_i,Y_i) - \nu_x'(A)\right)\right)^2 \notag\\
&= \frac 1 {n^2} \sum_{i=1}^n \E_n^x
\left(\bone_{A} (X_i,Y_i) - \nu_x'(A)\right)^2 \notag\\
&\qquad+ \frac 2 {n^2} 
 \sum_{i=1}^{n-1} \sum_{j=i+1}^n \E_n^x \left[
\left(\bone_{A} (X_i,Y_i) - \nu_x'(A)\right) 
\left(\bone_{A} (X_j,Y_j) - \nu_x'(A)\right) \right] \notag\\
&= \frac 1 n \E_n^x
\left(\bone_{A} (X_1,Y_1) - \nu_x'(A)\right)^2  \notag\\
&\qquad+ \frac {2(n^2-n)}{n^2} 
\E_n^x \left[
\left(\bone_{A} (X_1,Y_1) - \nu_x'(A)\right) 
\left(\bone_{A} (X_2,Y_2) - \nu_x'(A)\right) \right] \notag\\
&= \frac 1 n \E_n^x
\left(\bone_{A} (X_1,Y_1) - \nu_x'(A)\right)^2  \notag\\
&\qquad+ \frac {2(n^2-n)}{n^2} \Big(
\E_n^x \left[
\bone_{A} (X_1,Y_1)\bone_{A} (X_2,Y_2)\right]
-\E_n^x \left[
\bone_{A} (X_1,Y_1)\nu_x'(A)\right] \notag\\
&\qquad \qquad-\E_n^x \left[
\nu_x'(A)\bone_{A} (X_2,Y_2)\right]
+
(\nu_x'(A))^2\Big).\notag
\end{align}

It is easy to see that
the function $(x,y,u) \to \P_n^{x,y,u}$ is continuous in the weak topology.
This, Proposition \ref{j13.3} and \eqref{j13.6} imply that
\begin{align}\label{j13.7}
&\lim_{n\to \infty} \E_n^x \left[
\bone_{A} (X_1,Y_1)\bone_{A} (X_2,Y_2)\right] \\
&\quad= \lim_{n\to \infty}\int \E_n^{x,y,u} \left[
\bone_{A} (X_1,Y_1)\bone_{A} (X_2,Y_2)\right]   \mu_n(x,\rd y,\rd u)
= (\nu_x'(A))^2.\notag
\end{align}
For the same reason,
\begin{align}\label{j13.8}
\lim_{n\to \infty} \E_n^x \left[
\bone_{A} (X_k,Y_k)\right] &= \nu_x'(A), \qquad k=1,2.
\end{align}
The bound $\E_n^x
\left(\bone_{A} (X_1,Y_1) - \nu_x'(A)\right)^2 \leq 1$, \eqref{j13.4}, \eqref{j13.7} and \eqref{j13.8}
imply that 
\begin{align*}
\lim_{n\to \infty}\E_n^x \left( \Q_n(A) - \nu_x'(A)\right)^2 =0.
\end{align*}
The lemma follows because this statement holds for every $A$.
\end{proof}

\begin{proof}[Proof of Theorem \ref{a20.4}]
The theorem follows from Proposition \ref{j13.3} and  Lemma \ref{j14.1}.
\end{proof}

\section{Inverse temperature}\label{y19.7}

\begin{proposition}
The inverse temperature of the gas is asymptotically proportional to $\lambda_A$
defined in Theorem \eqref{j24.1}. More precisely, for every fixed $j\geq 1$,
\begin{align} 
\lim_{n\to\infty} \E_n \left(\frac 1 2 m \|V_j\|^2\right)=
\lim_{n\to\infty} \E_n \left(\frac 1 n\sum_{i=1}^{n}\frac 1 2 m \|V_i\|^2\right)&=\frac{dmg}{2\lambda_A}.\label{j15.5}
\end{align}
\end{proposition}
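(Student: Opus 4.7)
The plan is to condition on the positions $(\bX_{n+1},\bY_{n+1})$ and exploit the structure of the microcanonical ensemble \eqref{j10.4}. Energy conservation forces $\sum_{i=1}^{n+1}m_i\|V_i\|^2 = r_n^2$ with $r_n^2 := 2E - 2\sum_{i=1}^{n+1}m_i g Y_i$, and formula \eqref{j10.4} states that, conditionally on the positions, the rescaled vector $(\sqrt{m_1}V_1,\ldots,\sqrt{m_{n+1}}V_{n+1})$ is uniformly distributed on the sphere of radius $r_n$ in $\R^{(n+1)d}$. By the rotational symmetry of the uniform measure on a sphere, the expected squared value of any single coordinate equals $r_n^2/((n+1)d)$, so summing the $d$ coordinates belonging to particle $j$ gives
\begin{align*}
   \E_n\!\left[m_j\|V_j\|^2 \,\big|\, \bX_{n+1},\bY_{n+1}\right] = \frac{r_n^2}{n+1}\/.
\end{align*}
Specializing to a gas particle ($m_j = m/n$, $1 \le j \le n$) and using $r_n^2/2 = E - \sum m_i g Y_i$ yields
\begin{align*}
   \E_n\!\left[\tfrac{1}{2}m\|V_j\|^2 \,\big|\, \bX_{n+1},\bY_{n+1}\right] = \frac{n}{n+1}\left(E - \sum_{i=1}^{n+1} m_i g Y_i\right)\/.
\end{align*}

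I would then identify the limit of $\sum m_i g Y_i = m g \cdot \tfrac{1}{n}\sum_{i=1}^n Y_i + M g Y_{n+1}$ using Proposition \ref{j13.3}, which yields $\tfrac{1}{n}\sum_{i=1}^n Y_i \to u_A$ and $Y_{n+1} \to y_A$ in $\P_n$-probability, hence $\sum m_i g Y_i \to m g u_A + M g y_A$ in probability. Since $u_A = \scru(\lambda_A, y_A)$, equation \eqref{j28.4} (i.e., \eqref{j27.10} rewritten) at $(\lambda_A, y_A)$ reads $\tfrac{dmg}{2\lambda_A} + m g u_A + M g y_A = E$, so $E - m g u_A - M g y_A = \tfrac{dmg}{2\lambda_A}$ and the conditional expectation above converges in probability to $dmg/(2\lambda_A)$.

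To convert convergence in probability into convergence of the unconditional expectation, I would note that $0 \le E - \sum m_i g Y_i \le E$ almost surely (it equals the total kinetic energy, which is non-negative and bounded by the total energy), so the conditional expectation is uniformly bounded by $E$ and the bounded convergence theorem gives the first equality in \eqref{j15.5}. The second equality is then immediate from exchangeability: formula \eqref{j10.4} is symmetric in $(X_1,Y_1,V_1),\ldots,(X_n,Y_n,V_n)$ because $m_1=\cdots=m_n=m/n$, so $\E_n[\tfrac{1}{2}m\|V_j\|^2]$ does not depend on $j \in \{1,\ldots,n\}$ and coincides with the average $\E_n\!\left[\tfrac{1}{n}\sum_{i=1}^{n}\tfrac{1}{2}m\|V_i\|^2\right]$.

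There is no substantial obstacle. The content of the argument is the computation of the second moment of a block of $d$ coordinates of a uniform point on a high-dimensional sphere, combined with the already-established asymptotics from Proposition \ref{j13.3} and the energy identity \eqref{j28.4}, which identifies $dmg/(2\lambda_A)$ as the asymptotic total kinetic energy of the gas.
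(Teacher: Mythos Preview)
Your argument is correct and follows essentially the same route as the paper: both invoke Proposition~\ref{j13.3} for the convergence $\tfrac{1}{n}\sum Y_i\to u_A$, $Y_{n+1}\to y_A$, use the energy identity \eqref{j27.10} to recognize $E-mgu_A-Mgy_A=\tfrac{dmg}{2\lambda_A}$, and appeal to the symmetry of the uniform sphere measure for the individual-particle statement. Your version is in fact slightly more direct than the paper's: you compute the exact conditional expectation $\E_n[\tfrac12 m\|V_j\|^2\mid \bY_{n+1}]=\tfrac{n}{n+1}(E-\sum m_i g Y_i)$ via sphere symmetry and then pass to the limit with bounded convergence, whereas the paper first takes limits of the potential energies separately and deduces the kinetic limit by subtraction; this lets you handle the uniform-integrability point (boundedness by $E$) more cleanly than the paper's ``tightness'' remark.
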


\begin{proof}
Since the total energy $E$ is fixed, we have
\begin{align*}
\frac 1 n\sum_{i=1}^{n} mg Y_i \leq E, 
\qquad MgY_{n+1} \leq E,
\qquad \frac 1 {n+1}\sum_{i=1}^{n+1}\frac 1 2 m \|V_i\|^2\leq E.
\end{align*}
Hence, these sequences of random variables are tight. 
Recall $\P_n^{x,y,u}$ and $\mu_n$
from the proof of Lemma \ref{j14.1}.
By Proposition \ref{j13.3},
\begin{align}\label{a8.14}
\lim_{n\to\infty} \E_n &\left(\frac 1 n\sum_{i=1}^{n} mg Y_i\right)
=
\lim_{n\to\infty}\int \E_n^{x,y,u} \left(\frac 1 n\sum_{i=1}^{n} mg Y_i\right)\rd \mu_n(x,y,u)\\
&=
\lim_{n\to\infty}\int u \rd \mu_n(x,y,u)
= mg u_A
=mg \frac
{ \int_{D\setminus\cB((\wx, y_A), R)} y\lambda_A e^{-\lambda_A y} \rd x \rd y} 
{ \int_{D\setminus\cB((\wx, y_A), R)} \lambda_A e^{-\lambda_A y} \rd x \rd y}.
\notag
\end{align}
Proposition \ref{j13.3} and tightness also imply that
\begin{align}\label{a8.13}
\lim_{n\to\infty} \E_n MgY_{n+1} = Mgy_A.
\end{align}
This and \eqref{a8.14} show that the expected value of the potential energy of the point particles and the ball converges to a fixed number.
 Since the total energy $E$ is fixed, the expectation of the kinetic energy converges weakly to a fixed number as well, i.e., for some $\sigma>0$,
\begin{align}\label{j15.3}
\lim_{n\to\infty} \E_n\left(\frac 1 {n+1}\sum_{i=1}^{n+1}\frac 1 2 m \|V_i\|^2\right) =  \sigma^2.
\end{align}
The total energy is fixed so
 \eqref{a8.14}, \eqref{a8.13} and \eqref{j15.3} show that
\begin{align*}
\sigma^2 + 
mg \frac
{ \int_{D\setminus\cB((\wx, y_A), R)} y\lambda_A e^{-\lambda_A y} \rd x \rd y} 
{ \int_{D\setminus\cB((\wx, y_A), R)} \lambda_A e^{-\lambda_A y} \rd x \rd y} 
+ Mg y_A = E.
\end{align*}
Since $y_A$ and $\lambda_A$ solve \eqref{j27.10}, we must have $\sigma^2 = \frac{dmg}{2\lambda_A}$.

It remains to note that due to the symmetry of $\mu_{\by_{n+1}}$ in \eqref{j10.4},
\begin{align*}
\lim_{n\to\infty} \E_n \left(\frac 1 n\sum_{i=1}^{n}\frac 1 2 m \|V_i\|^2\right)=
\lim_{n\to\infty} \E_n\left(\frac 1 {n+1}\sum_{i=1}^{n+1}\frac 1 2 m \|V_i\|^2\right) 
=  \sigma^2= \frac{dmg}{2\lambda_A}.
\end{align*}
\end{proof}

\section{Uniqueness of the stationary distribution}\label{y19.8}

\begin{proof}[Proof of Theorem \ref{j18.1} (ii)]
The idea of the proof is the following. If there were more
than one invariant measure, at least two of them  would be mutually
singular by Birkhoff's ergodic theorem (\cite{Sin94}). Given any two starting configurations we will exhibit two deterministic trajectories meeting at the same point in the phase space at some time $t_1>0$. Then we will argue that due to the random nature of some reflections, both processes have densities that are strictly positive in some neighborhood of that point in the phase space.
Hence,  there are no mutually singular invariant measures.

Much of the proof will be presented in a very informal way. This is because our argument is totally elementary but it would be extremely tedious to write (or read) in a fully rigorous way.

\medskip
{\it Step 1}.
Assume that the initial condition of the system does not belong to any of the families (1) and (2) described in the statement of the theorem.
We will construct a single trajectory of the system. The trajectory will respect the laws of elastic collisions when they are assumed, i.e., for all collisions of the ball with the point particles and the walls of the container. For any reflection of a point particle from a wall of the container, we will choose a  direction after the reflection from all possible directions in a way that meets the needs of the argument. 

Recall that ``walls'' of the container include its bottom so point particles reflect according to the Lambertian law from the side walls and the bottom of the container.

Fix distinct points $z_1, \dots, z_{n+1}$ in $D_b$, such that the distance of $z_{n+1}$ from the side wall of the container is greater than $R$. 

We let the system evolve  according to the original dynamics until one of the point particles hits a wall at a time $s_1$. We  assume that the particle that hit the wall is labeled 1, since the labeling of point particles is irrelevant.

The first point particle can reflect in any direction, including directions arbitrarily close to the boundary. So  it can stay arbitrarily close to the boundary for an arbitrarily long time and move towards any point on the boundary, with the only limitation being its constant energy (the sum of potential and kinetic energies), see Figs. \ref{fig1}-\ref{fig2}. We let the system evolve  according to the original dynamics after time $s_1$, except that the first point particle will follow its own trajectory, constructed independently. Let $s_2$ be the next time when a particle different from the first one hits a wall.  We choose a trajectory  for the first particle  very close to the wall and moving towards $z_1$ in such a way that it avoids a collision with the ball on $[s_1,s_2]$ (recall that point particles do not interact).

\begin{figure} \includegraphics[width=0.6\linewidth]{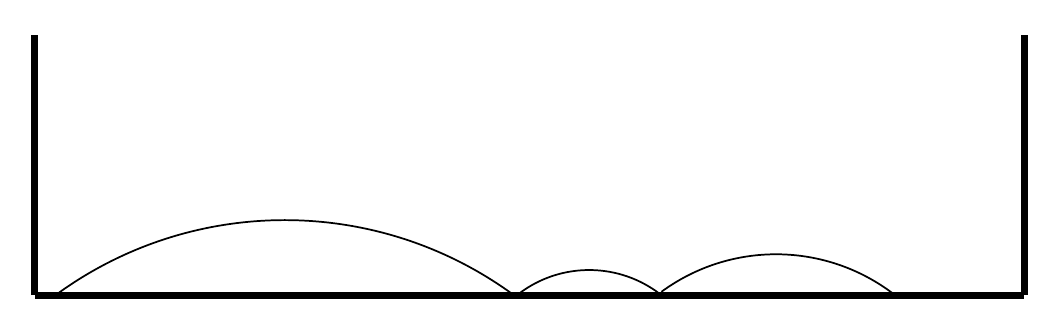}
\caption{Side view of a point particle trajectory reflecting from the bottom of the container. 
}
\label{fig1}
\end{figure}

\begin{figure} \includegraphics[width=0.6\linewidth]{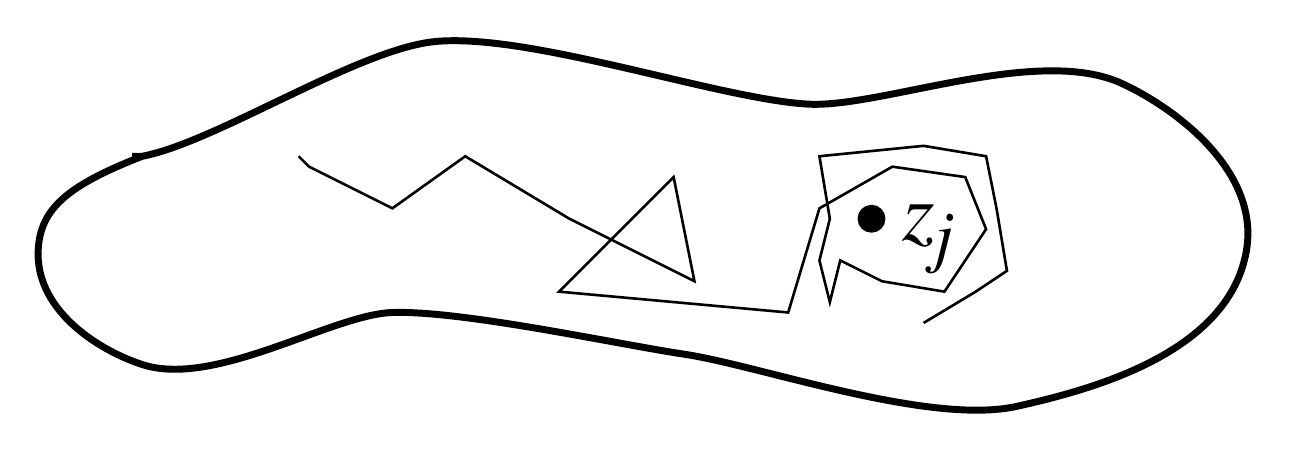}
\caption{View from above of a point particle trajectory reflecting from the bottom of the container. 
}
\label{fig2}
\end{figure}

We proceed by induction. Suppose that, for some $j<n$, a deterministic trajectory of the system has been constructed on $[0,s_j]$, including the trajectories of point particles  labeled $1,\dots , j$. These point particles stay close to the walls from the time of the first hit of a wall until $s_j$. Particle $k$ moves towards $z_k$ from the first time it hits a wall until $s_j$, for $k=1,\dots,j$. Given this inductive assumption, we let the system evolve  according to the original dynamics after time $s_j$, except that  point particles labeled $1,\dots, j$ will follow their own trajectories, constructed independently. Let $s_{j+1}$ be the next time when a particle different from $1,\dots,j$ hits a wall. We will call this particle $j+1$.  We choose a trajectory  for each of the  particles $1,\dots, j+1$  very close to the wall. The $k$-th particle is moving towards $z_k$ in such a way that it avoids a collision with the ball on $[s_j,s_{j+1}]$, for $k=1,\dots, j+1$.

We stop the construction when we have a trajectory of the system on $[0,s_{n}]$. We will continue after discussing a delicate point in the next step.

\medskip
{\it Step 2}. It is possible that fewer than $n$ point particles hit the walls of the container. This can happen only if some particles always reflect from the top part of the ball.
In this case, we let one of the point particles that are staying close to the boundary move towards the point where the ball reflects from the bottom of the container and then we let the point particle hit the ball slightly off center. That will nudge the ball off its  trajectory. The result will be that the point particles formerly reflecting from the top of the ball will move to the side and eventually hit a wall.

\medskip
{\it Step 3}.
At this step of the construction of the trajectory of the system, point particles will come close to the point at the bottom where the ball reflects and they will hit the ball, one at a time, see Fig. \ref{fig3}. All other point particles will keep close to the bottom and stay away from the ball.

\begin{figure} \includegraphics[width=0.6\linewidth]{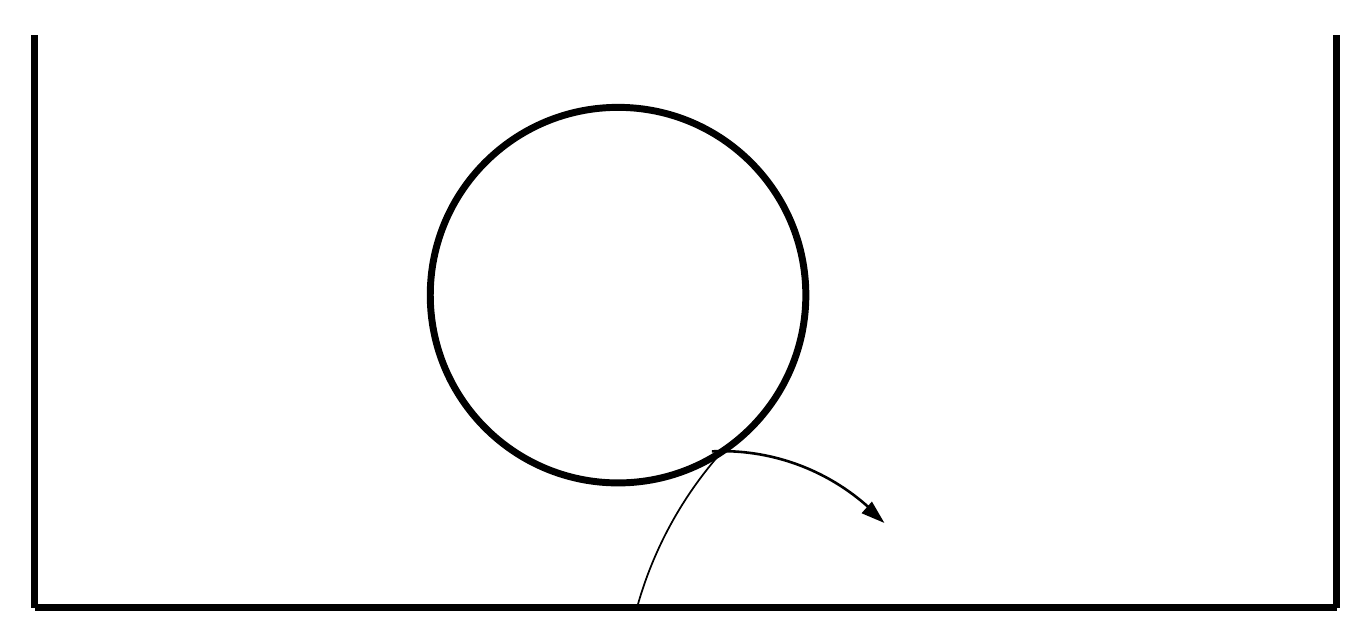}
\caption{Side view of the container. A point particle collides with the ball. 
}
\label{fig3}
\end{figure}

There are two major goals of the construction that can be achieved by this procedure. First, we can achieve equipartition of the energy. Second, we can put the ball above $z_{n+1}$ and make it move vertically.

We will explain how we can arrange for equipartition of energy between the point particles and the ball, so that $(E-MgR)/(n+1)$ of energy is given to each point particle and $(E-MgR)/(n+1) + MgR$ of energy is given to the ball.
Note that 
the minimal amount of energy a point particle can have is 0, assuming that it is sitting motionless at the bottom. For the ball, the minimal amount of energy is $MgR$.

The point particles do not interact with each other so the only way to transfer the energy between them is via collisions with the macroscopic ball.
We let
point particles approach the point where the ball reflects from the bottom, one at a time. Then we make the direction of the velocity of the point particle close to vertical (or at least considerably different from the horizontal; see Fig. \ref{fig3}). We make the point particle hit the ball either when the ball is moving up or down. In the first case, the point particle will lose energy and in the second case it will  acquire energy. By manipulating the place of the collision and the velocity direction of the point particle before the collision, and by repeating the procedure, if necessary, many times, we can partition the energy between particles and the ball in an arbitrary way. 

We need to add a few words clarifying the algorithm described above.
If a particle and the ball have the same amount of kinetic energy and $n$ is large then the speed of the particle is much larger than the speed of the ball. Hence, we can start by transferring energy to the ball  
from all point particles that have more than $(E-MgR)/(n+1)$ of  energy. Then the energy can be transferred from the ball  to the particles that had less than 
$(E-MgR)/(n+1)$ of  energy, one by one.

\medskip
{\it Step 4}.
We make the $n$-th point particle  collide with the ball as depicted in Fig. \ref{fig3} to change the trajectory of  the ball so it reflects vertically at $z_{n+1}\in D_b$.

After this is done, energy might not be equidistributed.
If necessary, we induce  energy transfer between the $n$-th particle and the ball by collisions of the particle with the bottom of the ball, as in Fig. \ref{fig4}.

\begin{figure} \includegraphics[width=0.6\linewidth]{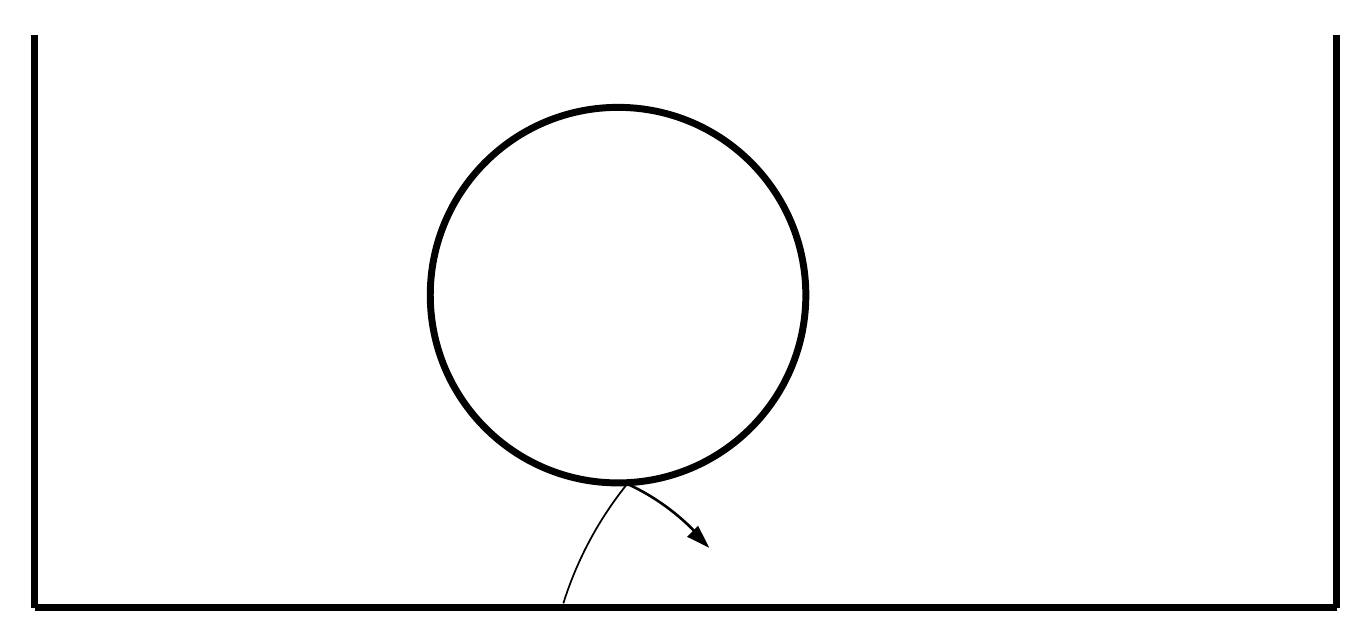}
\caption{Side view of the container. A point particle hits the ball at the lowest point on the surface. 
}
\label{fig4}
\end{figure}

Fix some time $t_1$ greater than the duration of the trajectory constructed so far, such that the ball hits $z_{n+1}$ at time $t_1$.
Make all point particles follow  trajectories such that  they all hit their own base points $z_k$, $k=1,\dots, n$, at time $t_1$. For this to be possible, it may be necessary to move $t_1$ to one of the later times when the ball hits $z_{n+1}$.

For future reference, let the deterministic trajectory constructed above be called $\Gamma = \{\Gamma(t), 0\leq t \leq t_1\}$.

\medskip
{\it Step 5}.
To finish the proof, we will argue as follows. We have shown that the system  can get to the same configuration at time $t_1$ for every initial configuration (the time $t_1$ may depend on the initial conditions---this is not a problem). The trajectory that we constructed is deterministic but it ``agrees'' with the dynamics of the system, including Lambertian reflections. Lambertian reflections introduce randomness. They 
make the state of the system at time $t_1$ random, with a density. The densities for different initial configurations overlap so there is only one stationary measure.

The subtle point is that the state density at time $t_1$ is not with respect to Lebesgue measure on $\R^{2nd}$ but on a hypersurface of dimension $2nd-1$ because the total energy is fixed (see \eqref{y19.1}).
We will now outline an argument addressing this concern.

Recall notation from Section \ref{y19.2} and let
\begin{align*}
\bZ_j(t) &= ( (X_1(t), \dots, X_j(t)), (Y_1(t), \dots, Y_j(t)), (V_1(t), \dots, V_j(t))) ,\\
\bZ_{j+}(t) &= ( (X_j(t), \dots, X_{n+1}(t)), (Y_j(t), \dots, Y_{n+1}(t)), (V_j(t), \dots, V_{n+1}(t))) ,\\
\bz_j &= ( (x_1, \dots, x_j), (y_1, \dots, y_j), (v_1, \dots, v_j)) ,\\
\bz_{j+} &= ( (x_j, \dots, x_{n+1}), (y_j, \dots, y_{n+1}), (v_j, \dots, v_{n+1})) ,
\end{align*}
where $x_k\in \R^{d-1}$, $y_k\in R$ and $v_k \in \R^d$.

For arbitrarily thin tube around $\Gamma$ there is a strictly positive probability that the system with random reflections $\bZ_{n+1}$ will stay inside the tube until time $t_1$. Consider a tube so thin that point particles undergoing random reflections in the tube collide with the ball 
in the same order as along the deterministic trajectory $\Gamma$.

We will assume without loss of generality that the point particles exchange energy with the ball along $\Gamma$ in the order $1,\dots, n$. 

Let $u_1$ be the last time the first particle hits a wall of the container before starting the process of exchanging the energy with the ball. 
If  the tube is very thin, the first particle will not collide with the ball after exchanging the energy with the ball. 
Let $u_1'$ be the last time  the first ball collides with the ball before time $t_1$.
We claim that for some small neighborhood $U_1$ of $\Gamma (t_1)$, some $c_1>0$ and $\bz_{n+1}=(\bz_1, \bz_{2+}) \in U_1$,  
\begin{align}\label{a1.2}
&\P_n(\bZ_1(t_1)\in \rd \bz_1 \mid \bZ_{n+1}(u_1))/\rd \bz_1 \geq c_1,\\
&\P_n(\bZ_{n+1}(t_1)\in \rd \bz_{n+1} \mid \bZ_{n+1}(u_1'))/\rd \bz_{n+1}  \geq c_1
\P_n(\bZ_{2+}(t_1)\in \rd \bz_{2+} \mid \bZ_{n+1}(u_1'))/\rd \bz_{2+}.\label{a1.3}
\end{align}
The  claim \eqref{a1.2} is true because the first particle acquires a random amount of energy, and its position and velocity direction are random due to Lambertian reflections following $u_1$. The  claim \eqref{a1.3} is a form of independence for conditional processes (conditioned to stay in separate tubes).

We proceed by induction. Suppose $2\leq j \leq n-1$.
Let $u_j$ be the last time the $j$-th particle hit a wall of the container before starting the process of exchanging the energy with the ball. 
Let $u_j'$ be the last time  the $j$-th particle collided with the ball before time $t_1$. Then for some small neighborhood $U_j\subset U_{j-1}$ of $\Gamma (t_1)$, some $c_j>0$ and $\bz_{n+1}=(\bz_j, \bz_{j+}) \in U_j$,  
\begin{align*}
&\P_n(\bZ_j(t_1)\in \rd \bz_j \mid \bZ_{n+1}(u_j))/\rd \bz_j \geq c_j,\\
&\P_n(\bZ_{n+1}(t_1)\in \rd \bz_{n+1} \mid \bZ_{n+1}(u_j'))/\rd \bz_{n+1}  \geq c_j
\P_n(\bZ_{(j+1)+}(t_1)\in \rd \bz_{(j+1)+} \mid \bZ_{n+1}(u_j'))/\rd \bz_{(j+1)+}.
\end{align*}
We apply this claim to $j=n-1$ to obtain
\begin{align*}
\P_n(\bZ_{n+1}(t_1)\in \rd \bz_{n+1} \mid \bZ_{n+1}(u_{n-1}'))/\rd \bz_{n+1}  \geq c_1
\P_n(\bZ_{n+}(t_1)\in \rd \bz_{n+} \mid \bZ_{n+1}(u_{n}'))/\rd \bz_{n+}.
\end{align*}
At this point it remains to analyze the interaction of the $n$-th particle and the ball. The position of the ball and its energy and velocity direction can be all made to have a joint density  by collisions with the $n$-th point particle after time $u_n$.

Finally, the position and velocity direction of the $n$-th point particle have a conditional density  given everything else, because of its Lambertian reflections from the bottom of the container. The energy of the $n$-th particle cannot be adjusted but this is fine because the same energy conservation principle applies to systems starting from other initial conditions.
\end{proof}

\begin{remark}\label{a1.1}
(i) There are many elementary examples of invariant measures for our dynamical system if we assume specular reflections of point particles from the walls of the container. To construct one of them, place all point particles  on disjoint vertical lines which do not intersect the macroscopic ball at time 0. Make initial velocities vertical for the ball and all point particles. Each of these objects will stay on a vertical line forever. To construct an invariant measure, use the ergodic theorem. 

(ii)
The following  example of an invariant distribution illustrates family (1) in Theorem \ref{j18.1}. Suppose that the macroscopic ball has a vertical initial velocity and all point particles are located on the vertical line passing through the ball's center, they are placed above the ball, and they all  have vertical initial velocities. In this case the center of the ball and the point particles will remain on the same vertical line forever and their velocities will also remain vertical (although the positions and velocities will not remain constant). For this initial condition, point particles will not hit the walls of the container so there will be no opportunity for the random reflections to cause mixing in the system.
Just like in part (i) of the remark, one can use the ergodic theorem to construct an invariant measure. 
\end{remark}

\section{Acknowledgments}

We are grateful to Shuntao Chen, Persi Diaconis, Martin Hairer, Robert Ho\l yst, Werner Krauth, Mathew Penrose and David Ruelle for the most useful advice.

\bibliographystyle{alpha}
\bibliography{arch_bib}

\end{document}